\documentclass[11pt,a4paper]{article}

\usepackage[a4paper,margin=1in]{geometry}
\usepackage[utf8]{inputenc}
\usepackage{subfiles}
\usepackage{mathtools}
\usepackage[thinc]{esdiff}
\usepackage{array}
\usepackage{gensymb}
\usepackage{amssymb}
\usepackage{amsmath, bm}
\usepackage{amsthm}
\usepackage{amsfonts}
\usepackage{braket}
\usepackage[super]{nth}
\usepackage[sorting=none]{biblatex}
\usepackage{caption}
\usepackage{subcaption}
\allowdisplaybreaks
\usepackage{placeins}
\usepackage{float}
\usepackage{tikz}
\usepackage{breqn}
\usepackage{hyperref}
\usepackage{bm}
\usetikzlibrary{shapes.geometric, arrows}
\usetikzlibrary{trees}
\usepackage[toc,page]{appendix}
\usetikzlibrary{quantikz2}
\usetikzlibrary{calc}
\usepackage{tikz-cd}
\usepackage{yfonts}
\usepackage{paralist}
\usepackage{circuitikz}
\usetikzlibrary{shapes.geometric}
\usepackage{relsize}
\usetikzlibrary{decorations.markings}
\usepackage{algorithm}
\usepackage{algpseudocode}
\usepackage[utf8]{inputenc}
\usepackage{physics}
\usepackage{nicematrix}
\usepackage{rotating}
\usepackage{booktabs}
\usepackage{subcaption}   
\usepackage{siunitx}
\usepackage{comment}
\usepackage{booktabs}

\newcommand{\R}{\mathbb R}

\theoremstyle{definition}
\newtheorem{theorem}{Theorem}[section]

\newtheorem{lemma}[theorem]{Lemma}
\newtheorem{proposition}[theorem]{Proposition}
\newtheorem*{remark}{Remark}

\title{Composition and tensor train structure in polynomial optimization}
\author{Lloren\c{c} Balada Gaggioli \thanks{
LAAS - CNRS, Université de Toulouse, France. Emails: llorenc.balada.gaggioli@fel.cvut.cz, henrion@laas.fr, korda@laas.fr}\textsuperscript{\;\;,}\thanks{
Faculty of Electrical Engineering, Czech Technical University in Prague, Czechia}  
\and Didier Henrion\footnotemark[1]\textsuperscript{\;\;,}\footnotemark[2]
\and Milan Korda\footnotemark[1]\textsuperscript{\;\;,}\footnotemark[2]}

\date{\today}

\usepackage{xcolor}
\definecolor{dkblue}{rgb}{0,0,0.8}

\begin{document}

\maketitle

\begin{abstract}
    
    We study polynomial optimization problems whose objective has a composition or tensor train structure. These polynomials can be evaluated as a sequence of maps, giving rise to intermediate variables (``states'') of dimension lower than the ambient dimension. Structures like these arise naturally in dynamical systems, Markov chains, and neural networks. We develop two moment-SOS (sums of squares) hierarchies that exploit this composition structure in different ways. The first one, termed state-lifting chordal, is based on the correlative sparsity graph of the problem. The second one, termed state-lifting push-forward, encodes the structure at the level of the measures directly. Numerical experiments demonstrate that the proposed methods can compute certified bounds for problems with hundreds or even a thousand variables. To illustrate the versatility of the hierarchies we apply them to Markov chain optimization, quantum optimal control, and neural networks.
\end{abstract}

\section{Introduction}

The \emph{moment-SOS (sums of squares) hierarchy} allows one to compute guaranteed bounds and sometimes the global optimum
of a multivariate polynomial subject to polynomial constraints; see \cite{HenrionKordaLasserre2020} for an overview
and applications. These bounds are computed by solving convex (typically
semidefinite) optimization problems of increasing size. For polynomial optimization problems without specific structure,
the size of these convex problems grows rapidly, restricting this approach to small dimensions only.
This motivated the study of structure exploiting approaches to overcome the curse of dimensionality.
Based on graph theoretical notions (chordal graph completion, clique tree decomposition), correlative and term
sparsity structures can be leveraged to improve significantly the scalability of the moment-SOS hierarchy.
The computational cost is no longer driven by the number of variables, but rather by the size of the largest
clique in the graph describing the correlations between the variables.
Tight bounds on the global optima of polynomials with thousands of variables and constraints have been obtained; see \cite{MagronWang23} for an overview.

This work proposes a new type of structure amenable for exploitation, which we refer to as the composition structure. Here, a polynomial in $n$ variables $p(x_1,\ldots,x_n)$ is written as
\[
p(x_1,\ldots,x_n) = F_n(\cdot,x_n)\circ F_{n-1}(\cdot,x_{n-1})\circ\ldots\circ F_2(\cdot,x_2)\circ F_1(x_1) ,
\] 
where the composition is with respect to the first arguments of the polynomial mappings $F_i: \R^{r_{i-1}}\times \R \to \R^{r_i}$ ($i = 2,\ldots,n$, $r_n=1$) and $F_1:\R\to\R^{r_1}$. Provided that the ranks $r_i$ are small compared to the ambient dimension $n$, this structure allows for a significant speedup when minimizing the polynomial using the moment-sum-of-squares hierarchies.  This is achieved by introducing auxiliary \emph{lifting variables}, $s_1,\ldots,s_n$, by setting 
\[
s_1=F_1(x_1),\quad s_i=F_i(s_{i-1},x_i)\, \text{ for }\, i=2,\dots,n, \quad p(x_1,\ldots,x_n) = s_n.
\]
In order to exploit the structure, we propose two methods. The first one uses correlative sparsity~\cite{Waki2006,Lasserre2006}, whereas the second one uses the so-called push-forward operators commonly used in the study of discrete-time dynamical systems or Markov processes (e.g., \cite{hernandez2012discrete}). The former leads to groups of variables indexing the moment matrices of maximal size $ \max_{i}(r_i + r_{i+1})+1$. The latter leads to groups of variables of maximal size  $ \max_{i}r_i  + 1$. The price to pay for the smaller group size of the push-forward approach is higher degree of the polynomials involved and we analyze the tradeoff both theoretically and numerically.

The composition structure is a very general modelling framework, with notable applications involving dynamical systems, neural networks or Markov processes. For dynamical systems, the additional lifting variables represent the state variable of the system and all ranks are equal to the state-space dimension. For neural networks, the lifting variables are the outputs of the hidden layers of the network. In all these examples, the polynomial in the original variables may be fully dense, while its computation remains local, step by step, through intermediate quantities of controlled dimension. This is precisely the type of low-dimensional structure that the proposed hierarchies are designed to exploit.

A particularly important special case is given by Tensor Train (TT) polynomials, where each mapping $F_i$ is linear in its first coordinate; this is equivalent to the existence of a TT decomposition~\cite{oseledets2011tensor} of the polynomial’s coefficient tensor with ranks equal to $r_i$. In other words a tensor-train polynomial $p$ can be written as \begin{equation}\label{ttpol}
p(x_1,\ldots,x_n) = \prod_{i=1}^n P_i(x_i),
\end{equation}
where $P_i$ are polynomial matrices of size $r_{i-1}\times r_{i}$. This perspective also clarifies the relation with our previous work on low-rank polynomial optimization \cite{LRPOP25}, where we consider polynomials in the Canonical-Polyadic (CP) format
\[
p(x_1,\ldots,x_n) = \sum_{l=1}^r\prod_{i=1}^n p_{l,i}(x_i),
\]
that is sums of products of univariate polynomials. By introducing suitable lifting variables, we showed that the CP structure induces a sparse lifted formulation of the moment-SOS hierarchy whose complexity depends on the rank $r$ rather than the ambient dimension. It is important to note that TT generalizes CP, in the sense that CP structure can be recovered from a TT structure where the cores are diagonal and the intermediate ranks are equal to the CP rank. 

From the tensor perspective, TT is also a natural alternative to CP, since it is generally better behaved as an approximation format: unlike CP, best low-rank TT approximations always exist, and stable constructive procedures are available \cite{oseledets2011tensor,deSilvaLim08}. At the same time, this work shows that the essential principle is broader than tensor multilinearity itself, what matters is the propagation of low-dimensional information through a chain of local maps, which naturally leads to the nonlinear composition framework studied here. Therefore, the fundamental idea is that certain high-dimensional polynomials can sometimes be evaluated via the propagation of low-dimensional information, and this structure can be exploited in polynomial optimization.

\paragraph{Contribution}
The main contribution of this work is to identify the composition structure as a new form of exploitable structure in polynomial optimization, through the introduction of intermediate lifting variables. This allows for solving  potentially very high-dimensional problems (i.e. large $n$) provided that the ranks $r_i$ are small. We rigorously prove convergence of the hierarchies exploiting this structure for both methods of exploitation: correlative sparsity and push-forward operators. We analyze the computational complexity of the method and demonstrate its numerical performance on a broad set of examples spanning quantum optimal control, Markov processes, and neural networks. Correlative sparsity exploitation for problems exhibiting chain-like structure has been proposed before in~\cite{teng2025convex,kang2024fast}. The key novelty of our work is that this structure can be \emph{created} by introducing additional lifting variables even though the original problem is fully dense. For certain classes of problems such as tensor train polynomials, this structure can even be detected directly from the coefficient tensor of the polynomial using tailored algorithms~\cite{oseledets2011tensor}. The structure exploitation based on push-forward operators seems new in the context of sparse polynomial optimization, even though it is a natural tool to model this problem class on the infinite-dimensional level of occupation measures, heavily used in discrete-time Markov processes literature (e.g., \cite{hernandez2012discrete}) as well as more recently in discrete-time dynamical systems (e.g., \cite{korda2014convex}).

\paragraph{Outline}
In Section \ref{sec:ttcomp} we describe the composition structure  that we  address in our new framework. We show that it includes the classical multilinear TT structure \eqref{ttpol} as a particular case. Section \ref{sec:momsos} is a self-contained description of the moment-SOS hierarchy for polynomial optimization problems with correlative sparsity structure. In particular, in Subsection \ref{sec:lrpop} we recall how our previous framework can exploit the low rank canonical polyadic tensor structure. The chordal and push-forward moment-SOS hierarchies are described in Section \ref{sec:chordal} and \ref{sec:push-forward}, with proofs of convergence, and a detailed complexity analysis. Both hierarchies are then illustrated and compared on a broad collection of numerical examples in Section \ref{sec:examples}. Finally, a varied set of applications of the methods presented are detailed in Section \ref{sec:applications}, including Markov chains, quantum optimal control and feedforward neural networks.

\paragraph{Notation}

We write $p(x)$ for a scalar polynomial, $F_i(s_{i-1},x_i)$ for maps from $\R^{r_{i-1}}\times\R$ to $\R^{r_{i}}$, and $P(x)$ for matrices with polynomial entries. The dimensions of the states are denoted $r_i$, $i=0,1,\ldots,n$, and referred to also as ranks, or bond dimensions in the context of tensor trains. For tensor trains, we call each of the matrices a tensor core.

\section{Compositional polynomials}\label{sec:ttcomp}

Many dense polynomials have an underlying structure that is not visible in their monomial expansion. In this work, we study polynomials that present a compositional structure, and the linear case of these, which are tensor train polynomials. This compositional structure can be exploited in two different ways, each coming with its advantages and drawbacks. The first way is based on the correlative sparsity graph arising from the lifting of the state variables, which we will call state-lifting chordal hierarchy. The other one is based on push-forward measure propagation, and we will call it state-lifting push-forward hierarchy. 

Let $x=\{x_1,\dots,x_n\}$, where $x_i$ can be a vector in general but we will use scalar $x_i \in \R$ for simplicity. We say that a polynomial $p(x)\in \R[x]$ presents a chain composition structure if there exist integers $r_0=1,r_1,\dots,r_{n-1},r_n=1$ and polynomial maps
\[
F_i:\R^{r_{i-1}}\times \R \rightarrow \R^{r_i},
\]
such that the polynomial $p(x)$ can be computed in the following way
\begin{equation}\label{eq:themodel}
    s_1 =F_1(x_1), \quad
    s_i  = F_i(s_{i-1},x_i), \quad
    p(x)=s_n,
\end{equation}
where $s_i\in \R^{r_i}$ are intermediate state variables. With this we have represented $p(x)$ as a one dimensional chain where, at each step, only the previous state and the local variable interact. The values of $r_i$ encode the amount of information propagated at each step. We denote the aggregate of lifting variables $s := (s_1,\ldots,s_n)$.

\paragraph{Tensor train polynomials} Now let us consider the specific case where at each step we have bilinearity in the state. We will call these tensor train polynomials. We start from a tensor perspective and see that it results exactly in the bilinear case of the compositional polynomials. 

A polynomial $p(x)$ can be written in terms of its coefficient tensor, $C$, for some univariate basis $\{\phi_{i,\alpha}\}_{\alpha\in \mathcal{A}_i}$ (e.g. monomials $\phi_{i,\alpha}(x_i)=x_i^{\alpha}$ up to degree $d_i$) as
\begin{equation}
  p(x)=\sum_{\alpha_1\in\mathcal A_1}\cdots\sum_{\alpha_n\in\mathcal A_n}
  C_{\alpha_1,\dots,\alpha_n}\ \prod_{i=1}^n \phi_{i,\alpha_i}(x_i),
\end{equation}

with $a_i=|\mathcal{A}_i|$, where $\mathcal{A}_i$ are index sets. Depending on the characteristics of the coefficient tensor, we can decompose it into a simpler form, such that we have to store less information. Here we study the train decomposition \cite{oseledets2011tensor,dolgov2013tensor}, also known as matrix product state in quantum physics. The coefficient tensor can be written as 
\begin{equation}
    C_{\alpha_1,\dots,\alpha_n}= U^{(1)}_{\alpha_1}U^{(2)}_{\alpha_2}\cdots U^{(n)}_{\alpha_n}, 
\end{equation}
with $U^{(i)}_{\alpha_i}\in \R^{r_{i-1},r_{i}}$ for $i=1,\dots,n$, with $r_0=r_n=1$, as before. We have a chain of tensor cores of dimension 3, which allows us to define the matrices of polynomials
\begin{equation}\label{eq:Pi}
    P_i(x_i) = \sum_{\alpha_i=1}^{a_i}U^{(i)}_{\alpha_i}\phi_{i,\alpha_i}(x_i) \in \R^{r_{i-1}\times r_i}.
\end{equation}

With these we can now write the polynomial as
\begin{equation}
    p(x)=\prod_{i=1}^n \Bigg(\sum_{\alpha_i=1}^{a_i}U^{(i)}_{\alpha_i}\phi_{i,\alpha_i}(x_i)\Bigg) =  P_1(x_1)P_2(x_2)\cdots P_n(x_n).
\end{equation}
We can see this as a product of univariate polynomial matrices. Now, the composition will look like
\begin{align*}
    s_1 &=P_1(x_1) \\
    s_i & = F_i(s_{i-1},x_i) := s_{i-1}P_i(x_i) \\
    p(x)&=s_n,
\end{align*}
where each step is bilinear in $(s_{i-1},x_i)$, and we call the integers $r_i$ the tensor ranks, or bond dimensions. Note that the labeling of the variables can be turned around such that we can write $s_i = F_i(s_{i-1},x_i) := P_i(x_i)s_{i-1}$ instead.

\subsection{Examples}

\subsubsection{A nonlinear composition map}

Consider the polynomial defined recursively by
\[
s_1=x_1,
\qquad
s_i=s_{i-1}^2+x_i,
\qquad i=2,\dots,n,
\]
and let
\[
p_n(x_1,\dots,x_n)=s_n.
\]
This is a chain composition map in which each state variable \(s_i\) depends only on the previous state \(s_{i-1}\) and the local variable \(x_i\).

The resulting polynomial is neither separable nor sparse. Indeed,
\[
p_2(x_1,x_2)=x_1^2+x_2,
\]
\[
p_3(x_1,x_2,x_3)=(x_1^2+x_2)^2+x_3
= x_1^4+2x_1^2x_2+x_2^2+x_3,
\]
and further iterations generate many mixed monomials involving variables from different stages. Thus the correlative sparsity pattern rapidly becomes dense.

\subsubsection{A tensor train example}

For the polynomial
\[
p_n(x_1,\dots,x_n)=\prod_{i=1}^{n-1}(1+x_i x_{i+1}),
\]
there is a natural tensor train representation in the ordering \((x_1,\dots,x_n)\):
\[
p_n(x_1,\dots,x_n)
=
\begin{bmatrix}1 & x_1\end{bmatrix}
\prod_{i=2}^{n-1}
\begin{bmatrix}
1 & x_i\\
x_i & x_i^2
\end{bmatrix}
\begin{bmatrix}
1\\
x_n
\end{bmatrix}.
\]
Therefore all intermediate TT ranks are equal to \(2\), namely
\[
(r_1,\dots,r_{n-1})=(2,\dots,2).
\]
So the maximal TT rank is uniformly bounded, independently of \(n\). This is a genuine TT example: the information propagates linearly through the chain, and the polynomial is exactly represented by small TT cores.

\subsubsection{Impact of the ordering of the variables}

For the polynomial of the previous section, the maximal TT rank is \(2\), independently of \(n\). This reflects the fact that, in the natural ordering, each bipartition \(\{x_1,\dots,x_i\}\mid\{x_{i+1},\dots,x_n\}\) cuts only one edge of the path graph, so a two-dimensional space is sufficient to transmit the local state across the chain. By contrast, if one chooses a nonlocal ordering, for instance an odd-even ordering
\[
(x_1,x_3,x_5,\dots,x_2,x_4,x_6,\dots),
\]
then many nearest-neighbor interactions \(x_i x_{i+1}\) are split simultaneously across the middle bonds, and the TT ranks can increase dramatically. Indeed, TT ranks are not intrinsic to the polynomial alone, they depend crucially on the chosen ordering of the variables. For example when $n=4$, using the odd-even order \((x_1,x_3,x_2,x_4)\), 
one obtains
\[
p_4(x_1,x_2,x_3,x_4)=
\begin{bmatrix}1&x_1\end{bmatrix}
\begin{bmatrix}
1 & 0 & x_3 & 0 & x_3^2 & 0\\
0 & 1 & 0 & x_3 & 0 & x_3^2
\end{bmatrix}
\begin{bmatrix}
1 & 0\\
x_2 & 0\\
x_2 & 1\\
x_2^2 & x_2\\
0 & x_2\\
0 & x_2^2
\end{bmatrix}
\begin{bmatrix}
1\\
x_4
\end{bmatrix},
\]
whose TT rank profile is
\[
(r_1,r_2,r_3)=(2,6,2).
\]
For \(n=6\), the odd-even TT rank profile is
\[
(r_1,r_2,r_3,r_4,r_5)=(2,6,18,6,2).
\]

\subsubsection{From a nonlinear chain composition to a tensor train}

Since any polynomial can be described in a tensor train format, a natural question is whether one can focus solely on the  tensor train polynomials, i.e. those for which the recurrence relation is linear in the $s$ variables. As it turns out, this can be highly suboptimal as is demonstrated by the following example. Consider as before the polynomial defined recursively by
\[
s_1=x_1,
\qquad
s_i=s_{i-1}^2+x_i,
\qquad i=2,\dots,n,
\]
and let
\[
p_n(x_1,\dots,x_n)=s_n.
\]

We now discuss how such a nonlinear composition may be converted into a tensor train representation after lifting. In order to linearize the update
\[
s_i=s_{i-1}^2+x_i,
\]
one has to enlarge the state so as to include suitable monomials in \(s_{i-1}\). For instance, if one introduces the lifted state
\[
\bigl[\,1,\ s_{i-1},\ s_{i-1}^2\,\bigr],
\]
then the update of the first two components is given by 
\[
1 \mapsto 1,
\qquad
s_i=s_{i-1}^2+x_i.
\]
However, in order to propagate the lifted state further, one must also represent
\[
s_i^2=(s_{i-1}^2+x_i)^2=s_{i-1}^4+2x_i s_{i-1}^2+x_i^2,
\]
which requires the additional monomial \(s_{i-1}^4\). At the next step, one similarly needs \(s_{i-1}^8\), and so on. Thus an exact linearization requires a hierarchy of lifted monomials
\[
1,\ s_i,\ s_i^2,\ s_i^4,\ \dots
\]
whose size grows with the length of the chain.

Therefore, this nonlinear composition does not admit an exact tensor train representation with uniformly bounded lifted state dimension. Passing from the nonlinear chain composition to a tensor train requires a growing lifted state, and hence increasing TT ranks. This illustrates why, in the paper, tensor train polynomials are restricted to the case where the propagated state has the form
\[
s_i=(s_{i,1},\dots,s_{i,r_i})^\top,
\]
with updates that are linear in the state variables after fixing \(x_i\). The present example falls naturally within the broader class of nonlinear composition maps, but not within the class of tensor train polynomials with uniformly bounded rank.

\section{Moment-SOS hierarchies}\label{sec:momsos}

Polynomial optimization can be studied through two complementary
representations: moment sequences and sum-of-squares (SOS) polynomials.
We briefly recall these notions before introducing the standard dense
semidefinite relaxations \cite{Lasserre2001} and its sparse counterpart. After that we turn to exploiting the new types of sparsity introduced in this work.

\noindent\textbf{Moments.}
Let $x=(x_1,\dots,x_n)$ and consider the monomial basis
$\{x^{\bm{\alpha}}\}_{\bm{\alpha}\in\mathbb{N}^n}$ of
$\mathbb{R}[x]$.  
We associate with this basis a sequence
$\mathbf{y}=(y_{\bm{\alpha}})$ indexed by the same multi-indices.
Using this sequence, we define a linear functional
$L_{\mathbf{y}}:\mathbb{R}[x]\rightarrow\mathbb{R}$ acting on
polynomials as
\begin{equation}
p(x)=\sum_{\bm{\alpha}} p_{\bm{\alpha}}x^{\bm{\alpha}}
\quad\mapsto\quad
L_{\mathbf{y}}(p)=\sum_{\bm{\alpha}} p_{\bm{\alpha}}y_{\bm{\alpha}} .
\end{equation}

For a relaxation order $k$, the \emph{moment matrix} $M_k(\mathbf{y})$
is indexed by monomials of degree at most $k$ and defined by
\begin{equation}
M_k(\mathbf{y})_{\bm{\beta},\bm{\gamma}}
=
L_{\mathbf{y}}(x^{\bm{\beta}}x^{\bm{\gamma}})
=
y_{\bm{\beta}+\bm{\gamma}} .
\end{equation}

Given a polynomial
$g(x)=\sum_{\bm{\alpha}} g_{\bm{\alpha}}x^{\bm{\alpha}}$,
one defines the associated \emph{localizing matrix}
\begin{equation}
M_k(g\mathbf{y})_{\bm{\beta},\bm{\gamma}}
=
L_{\mathbf{y}}(g\,x^{\bm{\beta}}x^{\bm{\gamma}})
=
\sum_{\bm{\alpha}} g_{\bm{\alpha}}
y_{\bm{\alpha}+\bm{\beta}+\bm{\gamma}} .
\end{equation}

These matrices encode positivity conditions satisfied by moment sequences
associated with measures supported on semialgebraic sets.

\medskip
\noindent\textbf{Sums of squares.}
Let $p(x)\in\mathbb{R}[x]$ be a polynomial.
We say that $p$ is a \emph{sum of squares} (SOS) if it can be written as
\begin{equation}
p(x)=\sum_{i=1}^L q_i(x)^2
\end{equation}
for some polynomials $q_1,\dots,q_L\in\mathbb{R}[x]$.
Any polynomial admitting such a decomposition is necessarily
nonnegative.

Suppose that $p$ has degree $2d$ and let $\mathbf{z}_d(x)$
denote the vector of all monomials in $x$ of degree at most $d$.
Then $p$ is SOS if and only if there exists a positive semidefinite
matrix $Q\succeq0$ such that
\begin{equation}
p(x)=\mathbf{z}_d(x)^{\!\top}Q\mathbf{z}_d(x).
\end{equation}

The matrix $Q$ is called a \emph{Gram matrix}. This characterization
connects polynomial nonnegativity certificates with semidefinite
programming.

\subsection{Dense hierarchy}

Consider the polynomial optimization problem
\begin{equation}
\begin{aligned}
\min_{x\in\mathbb{R}^n} \quad & p(x) \\
\text{s.t.}\quad & g_j(x)\ge0, \qquad j=1,\dots,m .
\label{eq:pop}
\end{aligned}
\end{equation}

Let
\[
K=\{x\in\mathbb{R}^n \mid g_j(x)\ge0,
\; j=1,\dots,m\}
\]
denote the feasible set.
The problem can be written as
\begin{equation}
p^*=\inf_{x\in K} p(x).
\end{equation}

An equivalent dual viewpoint is
\begin{equation}
\lambda^*=\sup\{\lambda \mid p(x)-\lambda \ge0,
\;\forall x\in K\}.
\end{equation}

Introducing $g_0=1$, the moment relaxation of order $k$ reads
\begin{equation}
\begin{aligned}
(P_k)\qquad
p_k=\inf_{\mathbf{y}} \quad & L_{\mathbf{y}}(p) \\
\text{s.t.}\quad
& L_{\mathbf{y}}(1)=1, \\
& M_{k-d_j}(g_j\mathbf{y})\succeq0, \qquad j=0,\dots,m ,
\end{aligned}
\end{equation}
where $d_j=\lceil\deg(g_j)/2\rceil$.

The dual SDP corresponds to the SOS program
\begin{equation}
\begin{aligned}
(D_k)\qquad
\lambda_k=\sup_{\lambda,\sigma_j} \quad & \lambda \\
\text{s.t.}\quad
& p(x)-\lambda
=
\sum_{j=0}^m \sigma_j(x)g_j(x), \\
& \sigma_j \text{ SOS}, \quad j=0,\dots,m .
\end{aligned}
\end{equation}

Equivalently,
\[
p(x)-\lambda \in \mathcal{Q}(\mathbf{g}),  \text{ where } \quad
\mathcal{Q}(\mathbf{g})
=
\left\{
\sum_{j=0}^m \sigma_j(x)g_j(x)
:\sigma_j \text{ SOS}
\right\}
\]
is the quadratic module generated by $\mathbf{g}=(g_0,\ldots, g_m)$.

If the quadratic module is \emph{Archimedean}, i.e., there exists
$R>0$ such that $R^2-\|x\|^2\in\mathcal{Q}(\mathbf{g})$,
then Putinar's Positivstellensatz guarantees convergence of the
hierarchy \cite{Lasserre2001}:
\begin{equation}
p_k \nearrow p^*, \qquad \lambda_k \nearrow p^* .
\end{equation}

This construction is referred to as the \emph{dense moment-SOS hierarchy}
because all variables appear jointly in the moment and Gram matrices,
leading to semidefinite blocks of size
\[
\binom{n+k}{k}.
\]
When the polynomial problem exhibits additional structure or sparsity,
specialized hierarchies can exploit this structure to produce
smaller semidefinite programs.

\subsection{Sparse hierarchies}

The dense hierarchy couples all variables together in a single moment
matrix. However, many polynomial optimization problems exhibit a
structured interaction pattern between variables. This structure can be
exploited through the notion of \emph{correlative sparsity} \cite{fukuda2001exploiting,kim2009generalized}.

Given the problem \eqref{eq:pop}, we construct the
\emph{correlative sparsity graph} $G=(V,E)$ \cite{Waki2006}, where the vertex set
$V=\{1,\dots,n\}$ represents the variables.
An edge $(i,i')\in E$ is introduced whenever the variables $x_i$ and
$x_{i'}$ appear together in a monomial of the objective polynomial $p$
or in one of the constraint polynomials $g_j$.
This graph therefore captures which variables are directly coupled in
the polynomial data.

To exploit this structure, we recall several standard notions from graph
theory.
A graph is said to be \emph{chordal} if every cycle containing four or
more vertices has a chord, that is, an edge connecting two nonconsecutive
vertices of the cycle \cite{Vandenberghe2015}.
A \emph{clique} is a subset of vertices such that every pair of vertices
in the subset is connected by an edge.

If the sparsity graph is not chordal, one can construct a
\emph{chordal extension} $\widetilde G$ by adding edges until the graph
becomes chordal.
The resulting graph admits a decomposition into cliques
$\mathcal{I}=\{I_1,\dots,I_N\}$ that satisfy the
\emph{running intersection property} (RIP): whenever two cliques share a
variable, all cliques on the path connecting them also contain that
variable.
When the cliques are maximal, this decomposition forms a tree
structure, commonly called a \emph{clique tree}.
Throughout this work we assume that the cliques
$I_1,\dots,I_N$ correspond to the maximal cliques of such a chordal
extension.

Each clique $I_a$ contains a subset of variables $x_{I_a}$, and the
polynomial data can be distributed among these cliques.
In particular, the objective polynomial can be written as
\[
p(x)=\sum_{a=1}^N p_a(x_{I_a}),
\]
where each $p_a$ depends only on the variables in clique $I_a$.
Similarly, each constraint polynomial $g_j$ is assigned to a clique
containing all of its variables; we denote by
$J_a\subset\{1,\dots,m\}$ the set of constraints associated with clique
$I_a$.

The sparse moment relaxation then introduces a separate moment sequence
for each clique.
The associated moment and localizing matrices are constructed only in
the variables $x_{I_a}$.
Consistency between different cliques is enforced through equality
constraints on the moments corresponding to shared variables.
These overlap constraints are defined on the \emph{separators}
$S_{ab}=I_a\cap I_b$ of the clique tree.

The key advantage of this construction is that the size of the
semidefinite blocks depends only on the number of variables inside each
clique rather than on the total number of variables $n$.
More precisely, the moment and Gram matrices associated with clique
$I_a$ have dimension
\[
\binom{|I_a|+k}{k},
\]
which can be significantly smaller than the dense size
$\binom{n+k}{k}$ when the cliques are small.
The overall SDP therefore consists of several smaller semidefinite
blocks coupled by linear consistency constraints.

Under a suitable \emph{sparse Archimedean condition} \cite{Lasserre2006}, namely the
existence of a decomposition
\[
R-\|x\|_2^2 = \sum_{a=1}^N q_a(x_{I_a})
\]
with each $q_a$ belonging to the quadratic module generated by the
constraints assigned to clique $I_a$, the resulting hierarchy is
monotone and convergent:
\[
p_k^{\mathrm{cs}}\nearrow p^*, \qquad
\lambda_k^{\mathrm{cs}}\nearrow p^*,
\]
as $k\to\infty$ \cite{Lasserre2006,Waki2006,Grimm2007}.
This approach is commonly referred to as the
\emph{correlative sparse SOS} (CSSOS) hierarchy. The convergence rate of this hierarchy is understood and indeed one observes an asymptotic speed-up compared to the dense hierarchy provided that the maximum clique size is sufficiently small compared to the ambient dimension~\cite{korda2025convergence}. 

Another type of sparsity arises when the polynomial contains only a
small number of monomials.
In this case, one can construct a graph whose vertices correspond to the
terms appearing in the problem data and apply a similar chordal
decomposition procedure.
This leads to the \emph{term sparsity SOS} (TSSOS) hierarchy
\cite{Wang2021}.
While CSSOS exploits sparsity in the interaction between variables,
TSSOS instead exploits sparsity in the set of monomials. Hybrid strategies combining both ideas have also been proposed
\cite{Wang2022}, together with other variants that exploit additional
structure such as ideal sparsity \cite{Korda2024} or a hidden low-dimensional structure~\cite{lasserre2022optimization}.

\subsection{Rank-sparse hierarchy}\label{sec:lrpop}

Another type of structure that can be exploited in polynomial
optimization arises when the objective admits a low-rank tensor
decomposition \cite{LRPOP25}. A polynomial $p(x)$ in $n$ variables is said to
have rank $r$ if it can be written as
\begin{equation}\label{eq:cp_poly}
p(x) = \sum_{l=1}^r \prod_{i=1}^n p_{l,i}(x_i),
\end{equation}
where each $p_{l,i}$ is a univariate polynomial.
This representation corresponds to a Canonical Polyadic (CP)
decomposition of the coefficient tensor of $p$.

To exploit this structure, additional variables are introduced that
recursively accumulate the partial products. For each rank component
$l=1,\dots,r$, we define lifting variables
\[
s_{l,1}=p_{l,1}(x_1), \qquad
s_{l,i}=s_{l,i-1}p_{l,i}(x_i), \quad i=2,\dots,n .
\]
These relations can be written as polynomial equality constraints
\[
h_{l,1}(t,x)=s_{l,1}-p_{l,1}(x_1), \qquad
h_{l,i}(t,x)=s_{l,i}-s_{l,i-1}p_{l,i}(x_i),
\]
so that the objective becomes
\[
p(x)=\sum_{l=1}^r s_{l,n}.
\]

The polynomial optimization problem can therefore be reformulated in the
extended variables $(x,s)$ with equality constraints
encoding the recursive relations above.
The resulting correlative sparsity graph has a structured form,
denoted $G_{r,n}$, whose vertices correspond to the variables
$\{x_i,s_{l,i}\}$.
This graph depends only on the rank $r$ and the number of variables $n$.

A key property of this construction is that the maximal clique size of a
chordal extension of $G_{r,n}$ is bounded independently of $n$.
In particular, the treewidth of the graph satisfies
\[
\mathrm{tw}(G_{r,n}) = \mathcal{O}(r),
\]
so the largest cliques contain at most $\mathcal{O}(r)$ variables.
As a consequence, the moment and Gram matrices arising in the
corresponding sparse moment-SOS hierarchy have size
\[
\binom{r+k}{k},
\]
which depends only on the rank and the relaxation order, but not on the
number of variables.

This hierarchy, referred to as \emph{low-rank polynomial optimization}
(LRPOP), therefore scales linearly with the number of variables while
the main computational cost is governed by the rank of the polynomial.
This property allows global optimization of problems with a very large
number of variables provided the rank remains small.

LRPOP is the precursor of the framework we are currently studying: composition of polynomials. It exploits the fact that we can encode the low-dimensional transfer of information by introducing intermediate quantities, and the rank encodes the complexity of the global transfer of information. Similarly, in our composition framework the ranks (or state dimensions) encode how complex is the transfer of information for each time step. If we consider a TT with only diagonal elements in its cores, and bond dimensions $r$ for all cores, then we recover the form of the CP decomposition. Therefore, a TT can generalize a CP decomposition in this sense.

\section{State-lifting chordal hierarchy}\label{sec:chordal}

In this first hierarchy we propose, we exploit the compositional structure~\eqref{eq:themodel} by leveraging the correlative sparsity it induces after introducing state-lifting variables, and studying the properties of the graph it generates. The block sizes in the resulting hierarchy will depend on the state dimensions (ranks) instead of the original number of variables. We want to solve the optimization problem
\begin{equation}
\begin{aligned}
\min_{x\in\mathbb{R}^n} \quad & p(x) \\
\text{s.t.}\quad & g_i(x_i)\ge0, \qquad i=1,\dots,n,
\label{eq:pop_sparse}
\end{aligned}
\end{equation}
where we assume that the set  $\{x \mid g_i(x_i) \ge 0\}$ is compact. The polynomial $p$ is given by the previously defined composition structure~\eqref{eq:themodel}

\[
s_{1} =F_1(x_1), \quad  s_i  = F_i(s_{i-1},x_i), \quad p(x)=s_n,
\]
where $F_1:\R^1\mapsto \R^{r_1}$, $F_i:\R^{r_{i-1}+1}\mapsto \R^{r_i}$ are polynomial maps, $s_i\in \R^{r_i}$ and $r_0=r_n=1$. In order to capture the  constraints defining the composition structure, we introduce the polynomials \[
h_{1,l}(s_{1},x_1)=s_{1,l}-F_{1,l}(x_1)=0, \qquad l=1,\dots,r_1
\]
and, for each $i = 2,\ldots,n$,
\[
h_{i,l}(s_{i-1},s_{i},x_i)=s_{i,l}-F_{i,l}(s_{i-1},x_i), \; l=1,\ldots,r_i
\] In these constraints, the variable coupling becomes evident, with the cliques being 
\[
\{s_{i-1}\}\cup \{s_i\}\cup\{x_i\}.
\]

The POP~\eqref{eq:pop_sparse} can equivalently be reformulated as
\begin{equation}
\label{eq:lifted-pop}
\begin{aligned}
\inf_{x,s}\quad & s_n \\
\text{s.t.}\quad 
& h_{i,l}(x_i,s_{i-1},s_{i})=0, \qquad i=1,\dots,n,\ l=1,\dots,r_i,\\
& g_i(x_i)\ge 0,\qquad i=1,\dots,n,
\end{aligned}
\end{equation}

Let us define, as we do for the other sparse hierarchies, the graph corresponding to the problem, such that we see where the cliques come from and how the constraints fit into these.

Let $G_{r}=(V,E)$ be the correlative sparsity graph of problem \eqref{eq:lifted-pop}, where the set of vertices is given by the set of variables, and two vertices are connected by an edge if they appear in the same constraint. The graph $G_{r}$ depends only on the state dimensions, or ranks, $r=\{r_1,\dots,r_{n-1},1\}$, and the number of variables, $n$, of $p(x)$. 

\begin{remark}
    For a fully general (dense) TT representation of the polynomial, as in Figure \ref{fig:graph}, the $s_i$ variables will be fully connected for each $i$, resulting in a chordal graph. If the tensor cores have sparse structures, $G_r$ might have less edges and lose the chordality, yielding smaller cliques after finding an adequate chordal extension.
\end{remark}

\begin{figure}[h]
    \centering
    \includegraphics[width=0.95\linewidth]{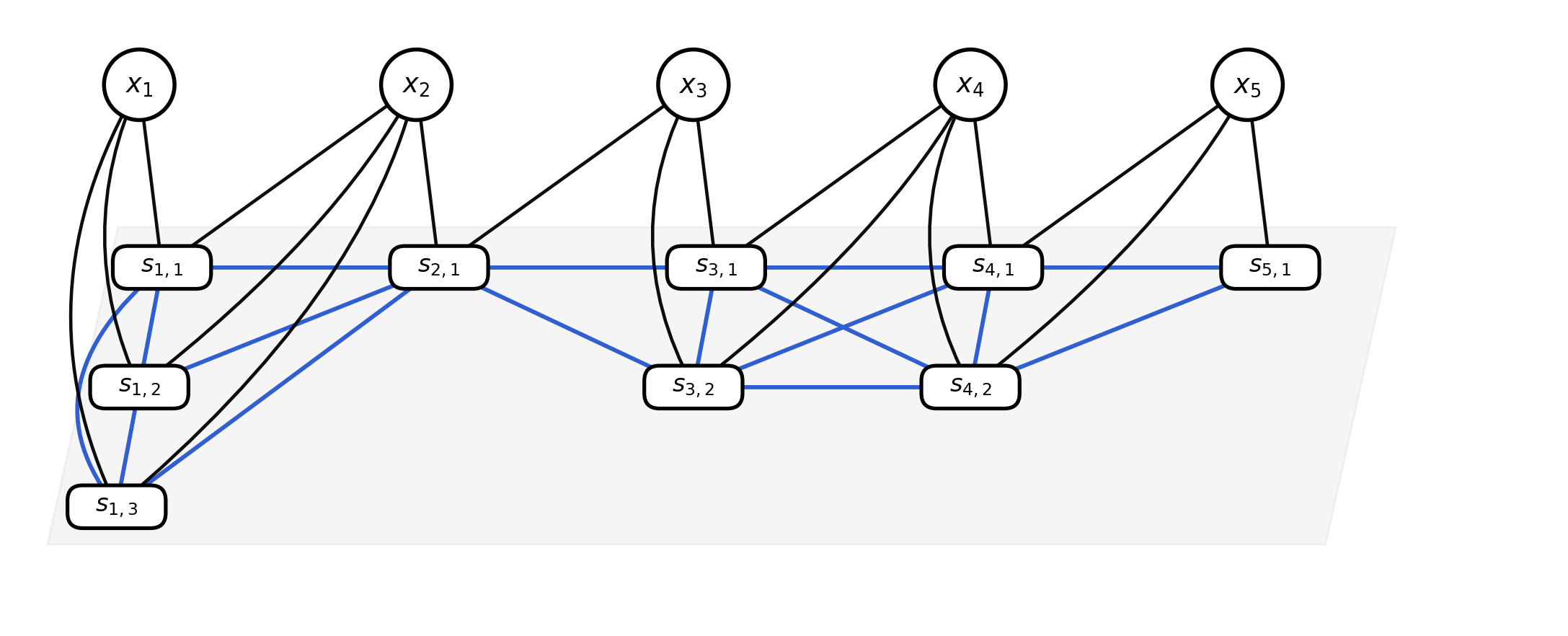}
    \caption{$G_r$ graph for $r=(3,1,2,2,1)$, where the black edges are the edges connecting original variables to lifted variables, and the blue lines connect lifted variables between themselves.}
    \label{fig:graph}
\end{figure}

\begin{figure}[h]
    \centering
    \includegraphics[width=0.95\linewidth]{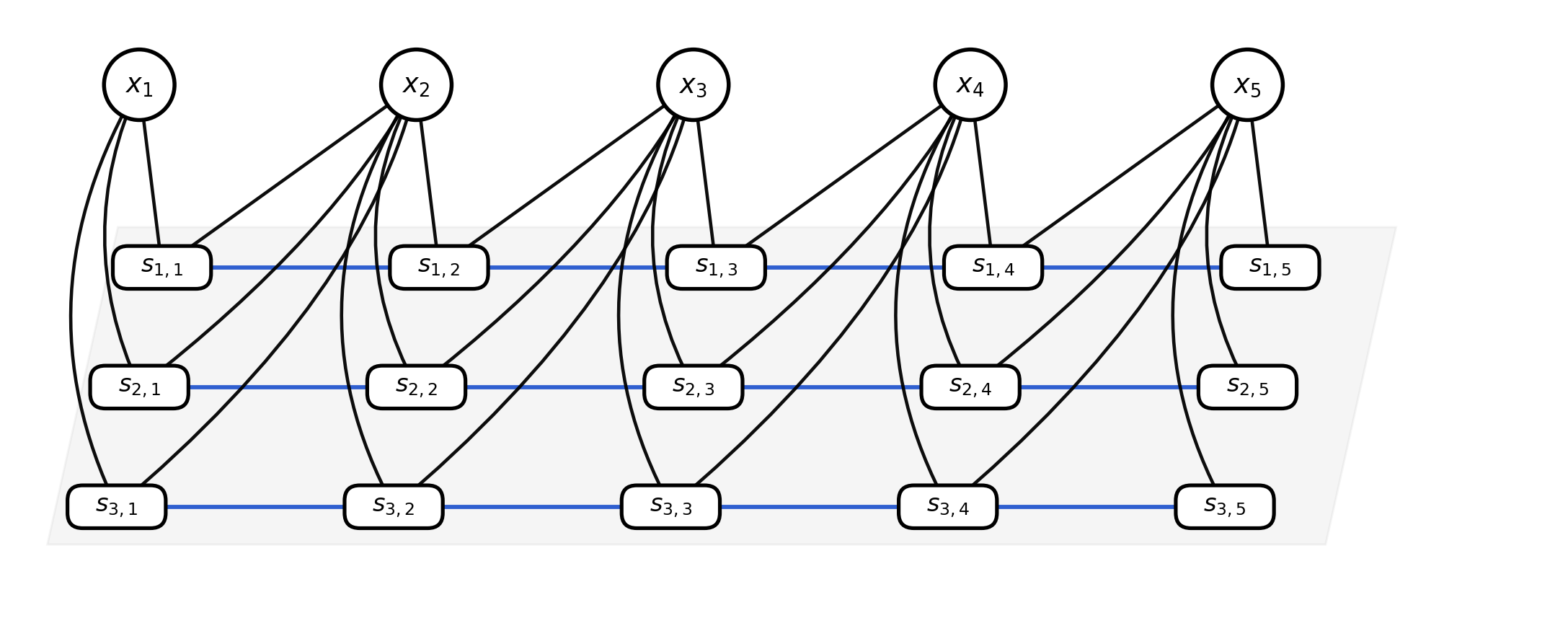}
    \caption{Graph for the CP-decomposed polynomial (as in \ref{eq:cp_poly}) of rank $3$ and in $5$ variables. The black edges are the edges connecting original variables to lifted variables, and the blue lines connect lifted variables between themselves.}
    \label{fig:graph_cp}
\end{figure}

We can compare the lifted correlative sparsity graph of the composition (and tensor train) form with that of the CP-decomposition form in Figure \ref{fig:graph_cp}. Note that the latter is not a chordal so it requires the addition of new chords in order to find a chordal extension and construct its cliques, while for the previous graph, we already have chordality.

A general polynomial with dense tensor cores will have a chordal graph $G_{r}$. Then there exists a clique tree decomposition satisfying the running intersection property (RIP). Note that by how we define the clique decomposition we have $n$ cliques $\mathcal{I}=\{I_1,\dots,I_n\}$ with $I_i$ containing the indices of variables of $\{x_i\} \cup \{s_{i}\} \cup \{s_{i-1}\}.$

By construction of $G_{r}$, every constraint polynomial $h_{i,l}$ and $g_i$ is supported on a single clique.
Therefore, we call $J_i \subset \{i\}$ the set of inequalities whose variables are contained in $I_i$, which are $g_i(x_i) \ge 0$, and $H_i = \{(i,1),\ldots,(i,r_i)\}$ the set of equalities whose variables are contained in $I_i$, $h_i$.

Let $z_{I_i}$ denote the vector of variables $\{x_i\} \cup \{s_{i}\} \cup \{s_{i-1}\}$ indexed by $I_i$. For a relaxation order $k$, let $y^{(i)}$ denote a truncated moment sequence in the variables $z_{I_i}$ up to degree $2k$, and let $M^{I_i}_k(y^{(i)})$ be the corresponding moment matrix. Similarly, let $M^{I_i}_{k-d_j}(g_j y^{(i)})$ be the localizing matrix for inequality $g_j$.

Since the set $\{x \mid g_i(x_i) \ge 0\}$ is compact and the functions $F_i$ continuous, there exist numbers $M_1,\ldots,M_n$ and $R_1,\ldots,R_n$ such that the ball constraints $x_i^2 \le M_i^2$ and $\|s_i\|^2 \le R_i^2$ can be added to~(\ref{eq:lifted-pop}) without changing its optimal value. This will ensure the convergence of the hierarchy, as shown in the next subsection.

The clique-based moment relaxation is given by

\begin{equation}\label{eq:slmom-sdp}
\begin{aligned}
(P^{\mathrm{chord}}_k):\quad 
p^{\mathrm{chord}}_k = \min_{\{y^{(i)}\}_{i=1}^n}& \quad 
 \sum_{i=1}^n L_{y^{(i)}}(p_i) \\
\text{s.t.}\quad 
& M^{I_i}_k(y^{(i)}) \succeq 0, \qquad i=1,\dots,n, \\
& M^{I_i}_{k-d_j}(g_j\,y^{(i)}) \succeq 0, 
   \qquad i=1,\dots,n,\ j\in J_i, \\
& M^{I_i}_{k-1}\bigl((M_i^2-x_i^2)\,y^{(i)}\bigr) \succeq 0,
   \qquad i=1,\dots,n, \\
& M^{I_i}_{k-1}\bigl((R_i^2-\|s_i\|^2)\,y^{(i)}\bigr) \succeq 0,
   \qquad i=1,\dots,n, \\
& L_{y^{(i)}}(q\,h_{i,l}) = 0, 
   \qquad (i,l)\in H_i,\ \deg(qh_{i,l})\leq 2k, \\
& y^{(a)}\big|_{S_{ab}} = y^{(b)}\big|_{S_{ab}}, 
   \qquad (a,b)\text{ edge of clique tree}, \\
& y^{(i)}_0 = 1, \qquad i=1,\dots,n.
\end{aligned}
\end{equation}

And the dual is 
\begin{equation}\label{eq:slsos-sdp}
\begin{aligned}
(D^{\mathrm{chord}}_k):\quad 
\lambda^{\mathrm{chord}}_k 
= &\sup_{\lambda,\{Q_{i,0}\},\{Q_{i,j}\},\{Q_{i,x}\},\{Q_{i,s}\},\{\tau_{i,l}\}}
\quad  \lambda \\
\text{s.t.}\quad 
& \sum_{i=1}^n p_i(x_{I_i}) - \lambda
  - \sum_{i=1}^n \sum_{j\in J_i} \sigma_{i,j}(x_{I_i}) g_j(x_{I_i}) \\
& \qquad
  - \sum_{i=1}^n \sigma_{i,x}(x_{I_i}) (M_i^2-x_i^2)
  - \sum_{i=1}^n \sigma_{i,s}(x_{I_i}) (R_i^2-\|s_i\|^2) \\
& \qquad
  - \sum_{i=1}^n \sum_{(i,l)\in H_i}
    \tau_{i,l}(x_{I_i}) h_{i,l}(x_{I_i})
  = \sum_{i=1}^n \sigma_{i,0}(x_{I_i}), \\
& \sigma_{i,0}(x_{I_i})
  = z_{k,I_i}(x)^\top Q_{i,0} z_{k,I_i}(x), \\
& \sigma_{i,j}(x_{I_i})
  = z_{k-d_j,I_i}(x)^\top Q_{i,j} z_{k-d_j,I_i}(x), \\
& \sigma_{i,x}(x_{I_i})
  = z_{k-1,I_i}(x)^\top Q_{i,x} z_{k-1,I_i}(x), \\
& \sigma_{i,s}(x_{I_i})
  = z_{k-1,I_i}(x)^\top Q_{i,s} z_{k-1,I_i}(x), \\
& Q_{i,0} \succeq 0,\quad Q_{i,j}\succeq 0,\quad Q_{i,x}\succeq 0,\quad Q_{i,s}\succeq 0, \\
& \deg(\tau_{i,l} h_{i,l}) \leq 2k .
\end{aligned}
\end{equation}
We call this moment-SOS hierarchy $\mathrm{SL}_{\mathrm{chord}}$, for \emph{state-lifting chordal} hierarchy.

\begin{remark}
    The extraction of minimizers is expected under the usual sparse flatness conditions from the sparse moment-SOS literature \cite{Waki2006,Lasserre2006,FantuzziFuentes2025}. If the clique moment matrices satisfy the appropriate rank conditions and are consistent on the separators, then one can recover globally compatible atoms by the standard sparse extraction machinery.
\end{remark}

\subsection{Convergence}

We show that the $\mathrm{SL}_{\mathrm{chord}}$ hierarchy converges to the global optimum. 

\begin{theorem}
We have
\[
p_k^{\mathrm{chord}} \nearrow \; p^*
\qquad \text{as } k\to\infty.
\]
\end{theorem}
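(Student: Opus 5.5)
The plan is to reduce the statement to the standard convergence theorem for the correlative sparse hierarchy (Lasserre 2006, Waki et al.\ 2006, Grimm et al.\ 2007) recalled in the sparse-hierarchies subsection. Three things must be checked for the lifted problem \eqref{eq:lifted-pop}, augmented with the redundant ball constraints:
\begin{enumerate}
\item the optimal value of the lifted problem equals $p^*$;
\item the cliques $I_1,\dots,I_n$ satisfy the running intersection property, and every constraint is supported on a single clique;
\item the sparse Archimedean condition holds.
\end{enumerate}
Monotonicity of $p_k^{\mathrm{chord}}$ in $k$ is immediate, because truncating a feasible order-$(k+1)$ collection of moment sequences gives a feasible order-$k$ collection with the same cost. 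Feasibility of Dirac moments at $(x^\star,s(x^\star))$ gives $p^{\mathrm{chord}}_k\le p^*$. Only the convergence remains.

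\textbf{Step 1 (equivalence).} For any feasible $x$, the equalities $h_{i,l}=0$ force $s_i$ to equal the composed state, so $s_n=p(x)$. The bounds $M_i,R_i$ were chosen so that every feasible point satisfies them. Hence the lifted feasible set is the graph of $x\mapsto s(x)$ over the compact set $\{x\mid g_i(x_i)\ge0\}$, and the infimum of $s_n$ is $p^*$.

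\textbf{Step 2 (RIP).} Clique $I_i$ is $\{x_i\}\cup\{s_{i-1}\}\cup\{s_i\}$, and consecutive cliques meet exactly in $\{s_i\}$. Nonconsecutive cliques are disjoint, since each $x_i$ and each block $s_i$ appears only in $I_i$ and $I_{i+1}$. The path $I_1-I_2-\dots-I_n$ is therefore a clique tree satisfying RIP. Every $h_{i,l}$, every $g_i$, and both ball constraints are supported on $I_i$. The objective $s_n$ lies in $I_n$, so we can take $p_n=s_n$ and $p_i=0$ for $i<n$.

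\textbf{Step 3 (Archimedean condition).} This is the only point needing care, because $s_{i-1}$ appears in $I_i$ but its ball constraint was attached to $I_{i-1}$. I would fix this by also adding the redundant constraint $R_{i-1}^2-\|s_{i-1}\|^2\ge0$ to clique $I_i$. Equivalently, one notes that each clique's quadratic module must bound all of its variables. If the stated relaxation only lists $\|s_i\|^2$, one can argue instead as follows. The equalities give $s_{i-1}$ in the ideal generated inside $I_{i-1}$, and the consistency constraints on the separator $S_{i-1,i}=\{s_{i-1}\}$ transport boundedness of the moments. Concretely, the localizing condition on $y^{(i-1)}$ for $R_{i-1}^2-\|s_{i-1}\|^2$ bounds all moments of $s_{i-1}$ in $y^{(i-1)}$, and these coincide with those in $y^{(i)}$.

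With the ball constraints for $x_i$, $s_i$ and $s_{i-1}$ in each clique, we get
\[
\sum_i\bigl(M_i^2-x_i^2\bigr)+\sum_i\bigl(R_i^2-\|s_i\|^2\bigr)
\]
as a sum of clique-supported elements of the respective quadratic modules, and each clique module is Archimedean on its own variables.

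\textbf{Conclusion.} Lasserre's sparse Putinar theorem now yields: for every $\varepsilon>0$, $s_n-p^*+\varepsilon=\sum_i(\sigma_{i,0}+\dots+\tau_{i,l}h_{i,l})$ with clique-wise certificates. For $k$ large enough this gives $\lambda^{\mathrm{chord}}_k\ge p^*-\varepsilon$, and weak duality gives $p^{\mathrm{chord}}_k\ge\lambda^{\mathrm{chord}}_k$. Alternatively, one can argue on the moment side. Take limits of the bounded clique moment sequences and apply Putinar per clique to obtain measures $\mu_i$ with matching marginals on the separators. Glue them along the chain by RIP (disintegration) into a probability measure supported on the lifted feasible set. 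Its cost $\int s_n\,d\mu\ge p^*$ yields $\lim_k p^{\mathrm{chord}}_k\ge p^*$.

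The main obstacle is the clique-wise Archimedean bookkeeping in Step 3 together with the gluing step. The gluing works because the cliques form a path and the separators are exactly the state variables $s_i$.
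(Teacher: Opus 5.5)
Your outline (exact lifting, the path clique tree with RIP, an Archimedean check, then the sparse Lasserre/Waki theorem) matches the paper's. You part ways at the Archimedean step, and your treatment there is the more careful one.

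The paper takes $q_i=(M_i^2-x_i^2)+(R_i^2-\|s_i\|^2)\in\mathcal Q_i$, observes $\sum_i q_i=R^2-\|x\|^2-\|s\|^2$, and declares the sparse Archimedean condition verified. It never addresses that $s_{i-1}\in I_i$ has no ball constraint in $\mathcal Q_i$. The sparse Putinar theorems of Lasserre and Grimm--Netzer--Schweighofer, as usually stated, require each clique module to contain a ball in all of that clique's variables. This is a real issue: the clique set of $I_i$ can be unbounded in $s_{i-1}$, e.g.\ when a TT core $P_i(x_i)$ is singular at a feasible $x_i$.

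Of your two repairs, only the moment-side one proves the statement as posed. Adding $R_{i-1}^2-\|s_{i-1}\|^2$ to clique $I_i$ gives a tighter relaxation with values $\ge p_k^{\mathrm{chord}}$. Its convergence therefore gives no lower bound on $p_k^{\mathrm{chord}}$. Your SOS-side conclusion via sparse Putinar likewise only certifies that augmented hierarchy.

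In the moment route, the step ``apply Putinar per clique'' does not apply, because those modules are not Archimedean. Replace it with the estimate you already have:
\begin{itemize}
\item Separator consistency gives $L_{y^{(i)}}(s_{i-1}^{2\beta})=L_{y^{(i-1)}}(s_{i-1}^{2\beta})\le R_{i-1}^{2|\beta|}$.
\item Peeling off $x_i$ and $s_i$ factors with the localizing matrices of clique $I_i$ bounds each diagonal entry $L_{y^{(i)}}(m^2)$ of $M_k(y^{(i)})$ by $C^{2\deg m}$, with $C=\max(M_i,R_i,R_{i-1})$, uniformly in $k$.
\item Limits are then exponentially bounded positive semidefinite sequences, so they have compactly supported representing measures.
\item The localizing and ideal constraints place these measures on the clique sets, and determinacy makes the separator marginals agree.
\item Your gluing along the path then finishes the argument.
\end{itemize}
Compared with the paper, this costs a hands-on compactness argument instead of a one-line citation. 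In return, it actually covers the relaxation $(P^{\mathrm{chord}}_k)$ as defined.
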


\begin{proof}
The lifted formulation in variables $(x,s)$ is equivalent to the original compositional polynomial optimization problem, since the equalities
\[
s_1 = F_1(x_1), \qquad s_i = F_i(s_{i-1},x_i), \quad i=2,\dots,n,
\]
uniquely determine the lifting variables from $x$. Therefore both problems have the same global optimum $p^*$.
The constants $M_i$ and $R_i$ have been chosen such that the ball constraints 
\[
M_i^2-x_i^2\ge 0, \qquad R_i^2-\|s_i\|^2\ge 0, \qquad i=1,\dots,n,
\]
are redundant and hence do not change the optimal value of the polynomial optimization problem.

For each clique $I_i$, let $\mathcal Q_i$ denote the quadratic module generated by the
inequalities assigned to that clique, including the redundant ball constraints above. Define
\[
q_i(z_{I_i}) := (M_i^2-x_i^2) + (R_i^2-\|s_i\|^2), \qquad i=1,\dots,n.
\]
Since each $q_i$ is a sum of generators of $\mathcal Q_i$ with SOS coefficients equal to $1$,
we have
\[
q_i \in \mathcal Q_i, \qquad i=1,\dots,n.
\]
Summing over all cliques yields
\[
\sum_{i=1}^n q_i(z_{I_i})
= \sum_{i=1}^n (M_i^2-x_i^2) + \sum_{i=1}^n (R_i^2-\|s_i\|^2)
= R^2-\|x\|^2-\|s\|^2,
\]
where
\[
R^2 := \sum_{i=1}^n M_i^2 + \sum_{i=1}^n R_i^2.
\]
Hence, the sparse Archimedean condition holds.

The clique decomposition used satisfies the running intersection property by construction, and the equalities of the original problem are handled through the local ideals they generate. Therefore, the standard convergence theorem for sparse moment-SOS hierarchies applies, and yields monotone convergence of the clique-based relaxations to the optimum of the lifted problem; see \cite{Lasserre2006,Waki2006}. Hence
$\
p_k^{\mathrm{chord}} \nearrow p^*.
$
\end{proof}

\subsection{Treewidth}

The structure of the lifted graph $G_{r}$ is completely determined by the state dimensions $r=\{r_1,\dots,r_{n-1},1\}$ and the number of variables $n$. Since the size of the PSD blocks in the clique-based hierarchy coincides with the size of the largest clique in $G_{r}$, we must quantify this size precisely.

To this end, we recall the notion of a tree decomposition and treewidth.

A \emph{tree decomposition} of a graph $G=(V,E)$ consists of a tree $T$ together with a family of subsets (called \emph{bags}) $X_1,\dots,X_L \subseteq V$ satisfying: 
\begin{itemize}
    \item $\bigcup_{i=1}^L X_i = V$;
    \item for every edge $(v,w)\in E$, there exists a bag $X_i$ containing both $v$ and $w$;
    \item for every vertex $v\in V$, the collection of bags containing $v$ forms a connected subtree of $T$.
\end{itemize}

The \emph{width} of a tree decomposition is $\max_i |X_i|-1$, and the \emph{treewidth} of $G$, denoted $\mathrm{tw}(G)$, is the minimum width over all tree decompositions of $G$.

For the clique-based moment-SOS construction, $G_{r}$ is already chordal, so it induces a clique tree decomposition, and we know that every tree decomposition of minimal width corresponds to a chordal completion whose maximal clique size equals $\mathrm{tw}(G_{r})+1$. Therefore,
\[
\max \text{clique size of } G_{r}\
=
\mathrm{tw}(G_{r})+1.
\]

Controlling the treewidth of $G_{r}$ therefore directly controls the
size of the largest PSD block in the hierarchy.

\begin{theorem}
    The variable graph $G_{r}$ induced by a compositional polynomial with state dimension $r=(r_1,\dots,r_n)$ has treewidth
    \[
    \mathrm{tw}(G_{r,n})=\max_i\{r_i+r_{i+1}\}.
    \]
    The treewidth is totally independent of the number of variables $n$.
    \label{TW}
\end{theorem}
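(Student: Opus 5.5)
The plan is to establish matching upper and lower bounds on $\mathrm{tw}(G_r)$ by analysing the cliques $I_i=\{x_i\}\cup\{s_{i-1}\}\cup\{s_i\}$ introduced above. The first step fixes the indexing. The bag $I_i$ has $r_{i-1}+r_i+1$ vertices, so the quantity to match is $\max_i (r_{i-1}+r_i)$ with $r_0=r_n=1$. This is the statement's $\max_i\{r_i+r_{i+1}\}$ after shifting the index by one, and I will state the result in that form. I work in the generic (dense) situation of the Remark following~\eqref{eq:lifted-pop}: each component $F_{i,l}$ genuinely depends on all of $s_{i-1}$ and on $x_i$. For sparse cores only the upper bound survives.

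\textbf{Upper bound.} I would exhibit an explicit tree decomposition, namely the path $T$ with nodes $1,\dots,n$ in order and bags $X_i=I_i$. The three axioms are checked as follows.
\begin{itemize}
\item The bags cover $V$, since every variable $x_i$ or $s_{i,l}$ lies in some $I_i$.
\item Every edge lies in some bag, because edges come only from a single constraint $h_{i,l}$ or $g_i$, whose variables lie in $I_i$.
\item For each vertex the bags containing it form a subpath: $x_i$ lies only in $X_i$, and $s_{i,l}$ lies exactly in $X_i$ and $X_{i+1}$, which are consecutive.
\end{itemize}
Hence $\mathrm{tw}(G_r)\le \max_i |I_i|-1=\max_i (r_{i-1}+r_i)$. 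This bound does not depend on $n$.

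\textbf{Lower bound.} I would use the standard fact that if $K$ is a clique of $G$, then every tree decomposition has a bag containing $K$. The proof is that the subtrees $T_v$, $v\in K$, pairwise intersect, because each edge $vw$ lies in a common bag. By the Helly property of subtrees of a tree they share a common node, so $\mathrm{tw}(G)\ge |K|-1$.

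It therefore remains to show that each $I_i$ is a clique of $G_r$. This is the step where the density assumption is used and where I expect the only real care to be needed.
\begin{itemize}
\item Each $h_{i,l}$ contains $s_{i,l}$, $x_i$ and all of $s_{i-1}$. So $s_{i-1}\cup\{x_i\}$ is complete, and each $s_{i,l}$ is adjacent to all of $s_{i-1}\cup\{x_i\}$.
\item The pairs $s_{i,l},s_{i,l'}$ never occur in the same $h_{i,\cdot}$. For $i<n$ they are joined through the constraints $h_{i+1,\cdot}$, in which all of $s_i$ appear together. For $i=n$ we have $r_n=1$, so there are no such pairs.
\end{itemize}
Thus $I_i$ is complete and $\mathrm{tw}(G_r)\ge r_{i-1}+r_i$ for every $i$.

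Combining the two bounds gives equality, and also shows that $G_r$ is chordal with maximal cliques exactly $I_1,\dots,I_n$. This is consistent with the identity (max clique size) $=\mathrm{tw}+1$ used before the theorem. Independence from $n$ is immediate, since the formula involves only the ranks.
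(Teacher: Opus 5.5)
Your proposal is correct and takes essentially the paper's route. For the upper bound you use the chain of bags $I_i=\{x_i\}\cup\{s_{i-1}\}\cup\{s_i\}$, which you write down directly as a path decomposition where the paper obtains it from a right-to-left perfect elimination ordering; for the lower bound you use that $I_i$ is complete in the dense case, invoking the Helly/clique-containment lemma where the paper invokes minor-monotonicity, which amounts to the same fact here. Your version is more explicit than the paper's about the dense-core assumption and about the edges inside $s_i$ coming from $h_{i+1,\cdot}$; the only slip is that $I_1=\{x_1\}\cup\{s_1\}$ has $r_1+1$ vertices, not $r_0+r_1+1$, since there is no $s_0$, which is harmless because the $i=2$ term $r_1+r_2$ dominates whenever $n\ge 2$.
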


\begin{proof}
    We prove the theorem by first using a chordal completion argument to get the upper bound, and a graph minor argument to obtain the lower bound.

    \textbf{Upper bound.} Consider the elimination order that goes column by column from right to left, first by eliminating the lifting variables $s_{i,l}$, and then the $x_i$ variables. The total ordering will be $s_{n,1},x_n,s_{n-1,1},\dots,s_{n-1,r_{n-1}},x_{n-1},\dots,s_{1,r_1},x_1$. Note that at each elimination step all the neighbors form a complete graph, so the ordering will be a PEO. During the ordering also notice how the biggest clique is of size $\max_i \{r_i+r_{i+1}+1\}$, for some $i$. Therefore we have an upper bound for the treewidth of $\max_i \{r_i+r_{i+1}\}$.

    \textbf{Lower bound.} Take the index $i$ which attains $\max_i \{r_i+r_{i+1}\}$, and consider the graph minor formed by the lifting variables in columns $i$ and $i+1$, and the variable $x_{i+1}$. This graph minor forms a complete graph with treewidth $r_i+r_{i+1}$, therefore we have a lower bound for the full graph treewidth, $\max_i \{r_i+r_{i+1}\}$.

    Putting both the upper and lower bounds together we show $\mathrm{tw}(G_{r,n})=\max_i\{r_i+r_{i+1}\}$.
\end{proof}

\begin{remark}
    The theorem is stated and proved for the fully general dense case, in which the state variables are fully coupled across neighboring stages and, in the TT case, the tensor cores have no additional sparsity. If some state variables are decoupled, or if the tensor cores are sparse, then the induced graph can have smaller treewidth. A clear example of this is letting the tensor cores be diagonal and size $r$, which then leads to a treewidth of $2r$, and this is equivalent to a CP decomposition of rank $r$, with treewidth $r+1\leq2r$ as we show in \cite{LRPOP25}. Therefore, additional structure in the composition maps can lead to smaller treewidths and smaller moment matrices.
\end{remark}

\subsection{Complexity}

Now we study how the $\mathrm{SL}_{\mathrm{chord}}$ hierarchy scales with the different parameters of the problem. Throughout this section we will assume all ranks are equal, $r_i=r\ \forall i$, and we start by considering the biggest cliques of the problem, which will be 
\[
    I_i=\{s_{i-1},x_i,s_i\}, \text{ with }\quad |I_i|= 2r+1.
\]

Now we can consider the main two complexity bottlenecks of SDP solvers, which are the size of the SDP blocks and the number of equality constraints.

\textbf{SDP block sizes.} For $\mathrm{SL}_{\mathrm{chord}}$ relaxation order $k$ we will have the biggest SDP blocks of size
\[
\binom{|I_i|+k}{k}=\binom{2r+1+k}{k},
\]
and there will be $O(n)$ many of such blocks. Therefore, the size of the biggest block will not depend on the total number of variables, just on the ranks.

\textbf{Overlap equalities.} By our definition of the cliques, we see that the overlap between two adjacent variable bags is of size $r$. To enforce consistency with our moment matrices, we equate all the moments up to degree $2k$ on the separator. This results in a total of
\[
(n-1)\binom{r+2k}{2k}
\]
equalities arising from the overlaps between cliques. As we see this is linear in $n$.

\textbf{Lifting equalities.} Every equality $h_{i,l}(x_i,s_{i-1},s_i)=0$ is imposed through 
\[
L_{y^{(i)}}(qh_{i,l})=0,
\]
for all monomials $q$ satisfying $\deg(qh_{i,l})\leq 2k$. Let $d=\deg(h_{i,l})$, then the number of such monomials in clique $I_i$ is 
\[
\binom{|I_i|+(2k-d)}{2k-d}=\binom{2r+1+2k-d}{2k-d}.
\]
Therefore, we have a total number of lifting equalities equal to 
\[
nr\binom{2r+1+2k-d}{2k-d}.
\]
This is also linear in $n$. Then we have a total number of equalities 
\[
(n-1)\binom{r+2k}{2k}+nr\binom{2r+1+2k-d}{2k-d}.
\]

Thus, for fixed $r, k$, and local degree d, the total number of equality constraints grows linearly with $n$. Their dependence on $r$ and $d$, however, is combinatorial through the binomial coefficients above, so increasing either the rank or the local degree can rapidly increase the size of the resulting SDP. In particular, for moderate or large $r$, the equality constraints are expected to become the main computational bottleneck of the $\mathrm{SL}_{\mathrm{chord}}$ hierarchy.

\section{State-lifting push-forward hierarchy}\label{sec:push-forward}

Now we consider a different type of moment-SOS hierarchy native to the compositional structure of the polynomials we study. In this case we encode the information of the chain structure at the level of the measures directly \cite{lasserre2008nonlinear}. As before we want to solve the POP~\eqref{eq:pop_sparse}, assuming that $p$ is represented as
\[
s_1 =F_1(x_1), \quad  s_i  = F_i(s_{i-1},x_i), \quad p(x)=s_n,
\]
where $s_i\in \R^{r_i}$ and $r_0=r_n=1$. Let us introduce the lifting variables $s_{i,l}$ for $i=1,\dots,n$ and $l=1,\dots,r_i$. 

We introduce the transition measure at step $i$, $\mu_i$, on variables $(s_{i-1},x_i)$, which encodes how the previous state and the current variable are distributed as well as an initial measure $\mu_1$ on the variable $x_1$. Now define for measurable maps $F_i$ and measures $\mu_i$, the push-forward of $\mu_i$ by $F_i$ to be $(F_i)_{\#}\mu_i$, which will be an image measure. We also let the projection map $\pi_{s_{i-1}}:(s_{i-1},x_i) \mapsto s_{i-1}$ be such that $(\pi_{s_{i-1}})_\#\mu_i$ is the marginal of $\mu_i$ on $s_{i-1}$. To maintain consistency, the state $s_{i-1}$ in the measure $\mu_i$ must match $s_{i-1}$ in the push-forward $(F_{i-1})_{\#}\mu_{i-1}$. 

As for the $\mathrm{SL}_{\mathrm{chord}}$ hierarchy, there exist numbers $M_1,\ldots,M_n$ and $R_1,\ldots,R_n$ such that the ball constraints $x_i^2 \le M_i^2$ and $\|s_i\|^2 \le R_i^2$ can be added to~(\ref{eq:measure-pop}) without changing its optimal value. This ensures the convergence of the hierarchy, as shown in the next subsection.

Now we write the POP in terms of measures as follows.

\begin{equation}\label{eq:measure-pop}
\begin{aligned}
\inf_{\{\mu_i\}} \quad
& \int F_n(s_{n-1},x_n) \, d\mu_n(s_{n-1},x_n)\\
\text{s.t.}\quad
& (\pi_{s_{i-1}})_\#\mu_i\;=\; (F_{i-1})_{\#}\mu_{i-1}, \quad i=2,\dots,n,\\
& \operatorname{supp}(\mu_1)\subset \{x_1:\ g_1(x_1)\ge 0,\ M_1^2-x_1^2\ge 0\},\\
& \operatorname{supp}(\mu_i)\subset \{(s_{i-1},x_i):\ g_i(x_i)\ge 0,\ M_i^2-x_i^2\ge 0,\ R_{i-1}^2-\|s_{i-1}\|^2\ge 0\},\quad i=2,\dots,n,\\
& \mu_1\in\mathcal{M}_+,\ \mu_1(\R)=1,   \\
& \mu_i\in\mathcal{M}_+,\ \mu_i(\R^{r_{i-1}}\times\R)=1, \quad i=2,\dots,n,
\end{aligned}
\end{equation}

where $\mathrm{supp}(\mu)$ is the support of the measure $\mu$ and $\mathcal{M}_+$ denotes the set of nonnegative Borel measures. For the hierarchy of SDPs, fix a relaxation order $k\in\mathbb{N}$.
For each stage $i=1,\dots,n$, let $y^{(i)}$ denote a truncated moment sequence of order $2k$
associated with the transition measure $\mu_i$:
\[
\mu_1 \ \text{on}\ x_1, 
\qquad 
\mu_i \ \text{on}\ (s_{i-1},x_i)\ \ (i\ge 2).
\]
Let $M_k(\cdot)$ denote the moment matrix and $M_{k-d_i}(g_i\,\cdot)$ the localizing matrix,
where $d_i:=\lceil \deg(g_i)/2\rceil$.

For multi-indices $\alpha\in\mathbb{N}^{r_i}$, write $s_i^\alpha:=\prod_{\ell=1}^{r_i} s_{i,\ell}^{\alpha_\ell}$.
Define the push-forward monomials
\[
\big(F_i(s_{i-1},x_i)\big)^\alpha := \prod_{\ell=1}^{r_i} F_{i,\ell}(s_{i-1},x_i)^{\alpha_\ell},
\]
and for $i=1$ interpret $F_1(x_1)^\alpha$ similarly.

The push-forward (measure propagation) relaxation is the SDP
\begin{equation}\label{eq:push-forward-primal}
\begin{aligned}
(P_k^{\mathrm{push}}):\quad 
p_k^{\mathrm{push}}& \;=\; \inf_{\{y^{(i)}\}}
\quad L_{y^{(n)}}(F_n(s_{n-1},x_n)) \\
\text{s.t.}\quad 
& M_k\!\big(y^{(i)}\big)\succeq 0,\quad i=1,\dots,n, \\
& M_{k-1}\bigl((M_i^2-x_i^2)\,y^{(i)}\bigr) \succeq 0,\qquad i=1,\dots,n,\\
& M_{k-1}\bigl((R_{i-1}^2-\|s_{i-1}\|^2)\,y^{(i)}\bigr) \succeq 0,\qquad i=2,\dots,n,\\
& M_{k-d_i}\!\big(g_i\,y^{(i)}\big)\succeq 0,\quad i=1,\dots,n, \\
& L_{y^{(2)}}\!\big(s_{1}^\alpha\big) = L_{y^{(1)}}\!\big(F_1(x_1)^\alpha\big),
\, \forall \alpha\in\mathbb{N}^{r_1},\ \deg(F_1^{\alpha})\le 2k,\\
& L_{y^{(i+1)}}\!\big(s_{i}^\alpha\big) = L_{y^{(i)}}\!\big(F_i(s_{i-1},x_i)^\alpha\big),
\quad \forall \alpha\in\mathbb{N}^{r_i},\ \deg(F_i^{\alpha})\le 2k,\ \ i=2,\dots,n-1,\\
& y^{(i)}_{0}=1,\quad i=1,\dots,n.
\end{aligned}
\end{equation}
Here $L_{y^{(i)}}$ denotes the Riesz functionals acting on polynomials in the
corresponding variables. 

Let $w_{k,1}(x_1)$ denote the vector of monomials in $x_1$ of degree at most $k$.
For $i=2,\dots,n$, let $w_{k,i}(s_{i-1},x_i)$ denote the vector of monomials in $(s_{i-1},x_i)$
of degree at most $k$, and let $w_{k-d_i,i}(s_{i-1},x_i)$ denote the vector of monomials in
$(s_{i-1},x_i)$ of degree at most $k-d_i$.
For $i=1$, we analogously write $w_{k-d_1,1}(x_1)$ and $w_{k-1,1}(x_1)$. Introduce potential polynomials $V_i \in \mathbb{R}[s_i]$ of degree $\le 2k$ for
$i=1,\dots,n-1$ (with the convention $V_n(s_n):=s_n$), and SOS multipliers
$\sigma_{i,0},\sigma_{i,1}$ for each step constraint $g_i(x_i)\ge 0$.

The dual push-forward relaxation can be written as
\begin{equation}\label{eq:push-forward-dual-gram}
\begin{aligned}
(D^{\mathrm{push}}_k):\quad 
\lambda^{\mathrm{push}}_k
= &\sup_{\lambda,\{V_i\},\{Q_{i,0},Q_{i,1},Q_{i,x}\},\{Q_{i,s}\}_{i=2}^n}
\quad  \lambda\\
\text{s.t.}\quad 
& V_1(F_1(x_1)) - \lambda
 = \sigma_{1,0}(x_1) + \sigma_{1,1}(x_1) g_1(x_1)
 + \sigma_{1,x}(x_1)(M_1^2-x_1^2),\\
& V_i(F_i(s_{i-1},x_i)) - V_{i-1}(s_{i-1})
 = \sigma_{i,0}(s_{i-1},x_i)
 + \sigma_{i,1}(s_{i-1},x_i) g_i(x_i)\\
& \hspace{4.3cm}
 + \sigma_{i,x}(s_{i-1},x_i)(M_i^2-x_i^2)\\
& \hspace{4.3cm}
 + \sigma_{i,s}(s_{i-1},x_i)(R_{i-1}^2-\|s_{i-1}\|^2),\quad i=2,\dots,n,\\
& \sigma_{1,0}(x_1) = w_{k,1}(x_1)^\top Q_{1,0}\, w_{k,1}(x_1),\\
& \sigma_{i,0}(s_{i-1},x_i) = w_{k,i}(s_{i-1},x_i)^\top Q_{i,0}\, w_{k,i}(s_{i-1},x_i),
\qquad i=2,\dots,n,\\
& \sigma_{1,1}(x_1) = w_{k-d_1,1}(x_1)^\top Q_{1,1}\, w_{k-d_1,1}(x_1),\\
& \sigma_{i,1}(s_{i-1},x_i) = w_{k-d_i,i}(s_{i-1},x_i)^\top Q_{i,1}\, w_{k-d_i,i}(s_{i-1},x_i),
\qquad i=2,\dots,n,\\
& \sigma_{1,x}(x_1) = w_{k-1,1}(x_1)^\top Q_{1,x}\, w_{k-1,1}(x_1),\\
& \sigma_{i,x}(s_{i-1},x_i) = w_{k-1,i}(s_{i-1},x_i)^\top Q_{i,x}\, w_{k-1,i}(s_{i-1},x_i),
\qquad i=2,\dots,n,\\
& \sigma_{i,s}(s_{i-1},x_i) = w_{k-1,i}(s_{i-1},x_i)^\top Q_{i,s}\, w_{k-1,i}(s_{i-1},x_i),
\qquad i=2,\dots,n,\\
& Q_{i,0} \succeq 0,\ Q_{i,1} \succeq 0,\ Q_{i,x} \succeq 0,\qquad i=1,\dots,n,\\
& Q_{i,s} \succeq 0,\qquad i=2,\dots,n,
\end{aligned}
\end{equation}

The role of the potential polynomials $V_i$ becomes clear by summing the
step-by-step identities in~\eqref{eq:push-forward-dual-gram}.
For $i=1$ we have
\[
V_1(F_1(x_1)) - \lambda
=
\sigma_{1,0}(x_1) + \sigma_{1,1}(x_1) g_1(x_1)
+ \sigma_{1,x}(x_1)(M_1^2-x_1^2),
\]
and for $i=2,\dots,n$,
\begin{align*}
V_i(F_i(s_{i-1},x_i)) - V_{i-1}(s_{i-1})
=&\,
\sigma_{i,0}(s_{i-1},x_i)
+\sigma_{i,1}(s_{i-1},x_i) g_i(x_i)\\
&+\sigma_{i,x}(s_{i-1},x_i)(M_i^2-x_i^2)
+\sigma_{i,s}(s_{i-1},x_i)(R_{i-1}^2-\|s_{i-1}\|^2).
\end{align*}
Summing these equalities over $i=1,\dots,n$ yields a telescoping cancellation of the
intermediate terms $V_{i-1}(s_{i-1})$, leaving
\begin{align*}
s_n-\lambda
=&  \sigma_{1,0}(x_1)+\sigma_{1,1}(x_1)g_1(x_1)+\sigma_{1,x}(x_1)(M_1^2-x_1^2) \\
&+\sum_{i=2}^n \sigma_{i,0}(s_{i-1},x_i)
+\sum_{i=2}^n \sigma_{i,1}(s_{i-1},x_i)g_i(x_i)\\
&+\sum_{i=2}^n \sigma_{i,x}(s_{i-1},x_i)(M_i^2-x_i^2)
+\sum_{i=2}^n \sigma_{i,s}(s_{i-1},x_i)(R_{i-1}^2-\|s_{i-1}\|^2).
\end{align*}
Hence $s_n-\lambda$ admits a decomposition as a sum of SOS polynomials plus SOS multiples of the constraints $g_i$ and of the redundant ball constraints, showing that $\lambda$ is a valid lower bound on the global optimum.

Note that, while in $\mathrm{SL}_{\mathrm{chord}}$ we had $\max_i r_i+r_{i+1}$ variables per moment matrix, which came from the maximum clique size of the correlative sparsity graph, now we only have $\max_i r_i+1$ variables per moment matrix. In this case, we are not building a correlative sparsity graph but we are grouping the variables according to the measures we define, which result in the different moment matrices.

However, now the degree $\alpha$ enters the picture, if this is small we might have smaller SDP blocks, and if it is bigger we will get bigger SDP blocks than we previously had. The degree we pick will depend on how the convergence of the hierarchy behaves, for some examples it might converge for low degree, and maybe in other cases it might require higher degree.

We will call this moment-SOS hierarchy $\mathrm{SL}_{\mathrm{push}}$, for \emph{state-lifting push-forward hierarchy}.

\subsection{Convergence}

To prove that $\mathrm{SL}_{\mathrm{push}}$ converges to the optimal value, we start by proving that the measure problem is equivalent to the original compositional polynomial problem. For this we will need the so-called Gluing Lemma.

\begin{lemma}[Gluing lemma]\label{lemma:gluing}
Let $\mu_1$ be a probability measure on $X_1\times X_2$, and let
$\mu_2$ be a probability measure on $X_2\times X_3$. If their marginals on $X_2$ coincide, that is
\[
(\pi_{X_2})_{\#}\mu_1 = (\pi_{X_2})_{\#}\mu_2,
\]
then there exists a probability measure $\mu_T$ on $X_1\times X_2\times X_3$ with
\[
(\pi_{X_1,X_2})_{\#}\mu_T = \mu_1,\quad (\pi_{X_2,X_3})_{\#}\mu_T = \mu_2.
\]
\end{lemma}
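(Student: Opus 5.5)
The plan is to prove the gluing lemma by disintegration, that is, by conditioning both measures on the common coordinate $X_2$. Throughout, $X_1,X_2,X_3$ are taken to be Polish spaces, which covers the paper's setting of closed subsets of Euclidean spaces with Borel $\sigma$-algebras. Write $\nu := (\pi_{X_2})_{\#}\mu_1 = (\pi_{X_2})_{\#}\mu_2$ for the common marginal.

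The first step is the disintegration theorem. It gives Borel measurable families of probability kernels $x_2\mapsto \kappa_1(\cdot\mid x_2)$ on $X_1$ and $x_2\mapsto \kappa_3(\cdot\mid x_2)$ on $X_3$. They satisfy
\[
\mu_1(dx_1,dx_2)=\kappa_1(dx_1\mid x_2)\,\nu(dx_2),\qquad \mu_2(dx_2,dx_3)=\kappa_3(dx_3\mid x_2)\,\nu(dx_2).
\]
These kernels are unique only up to $\nu$-null sets. This step is the main technical point: existence of regular conditional probabilities needs some regularity of the spaces, and Polishness is exactly what supplies it. It is also the only place where the hypothesis $(\pi_{X_2})_{\#}\mu_1=(\pi_{X_2})_{\#}\mu_2$ is used, since it lets both disintegrations be written against the \emph{same} base measure $\nu$.

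The second step is to define the glued measure as the conditionally independent coupling. For Borel sets $A_j\subseteq X_j$, set
\[
\mu_T(A_1\times A_2\times A_3):=\int_{A_2}\kappa_1(A_1\mid x_2)\,\kappa_3(A_3\mid x_2)\,\nu(dx_2).
\]
This extends to a probability measure on $X_1\times X_2\times X_3$. One way to see this is to observe that $x_2\mapsto \kappa_1(\cdot\mid x_2)\otimes\delta_{x_2}\otimes\kappa_3(\cdot\mid x_2)$ is a measurable kernel, and to integrate it against $\nu$. Equivalently, one can apply the Carathéodory extension to the semiring of rectangles, with $\sigma$-additivity inherited from monotone convergence.

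The third step is to check the marginals. Take $A_3=X_3$; then $\kappa_3(X_3\mid x_2)=1$, so $\mu_T(A_1\times A_2\times X_3)=\int_{A_2}\kappa_1(A_1\mid x_2)\,\nu(dx_2)=\mu_1(A_1\times A_2)$. Rectangles form a $\pi$-system generating the product $\sigma$-algebra, so Dynkin's $\pi$–$\lambda$ theorem gives $(\pi_{X_1,X_2})_{\#}\mu_T=\mu_1$. The identity $(\pi_{X_2,X_3})_{\#}\mu_T=\mu_2$ follows symmetrically by taking $A_1=X_1$. The total mass is $\nu(X_2)=1$, so $\mu_T$ is a probability measure.

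Beyond the disintegration step, nothing is delicate. Alternatively, the lemma can be cited directly from optimal transport references, for example Villani.
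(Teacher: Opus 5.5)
Your proof is correct. Disintegrating both measures against the common marginal $\nu$ and taking the conditionally independent product $\kappa_1(dx_1\mid x_2)\,\kappa_3(dx_3\mid x_2)\,\nu(dx_2)$ is the standard proof of this lemma; the paper gives no argument of its own beyond citing \cite[Lem.~B.13]{Lasserre2010}. You assume Polish spaces, which the paper's statement leaves unstated, but this is harmless because all measures in the paper live on Euclidean spaces.
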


This is a standard result, see for example \cite[Lem. B.13, p.~308]{Lasserre2010}.

\begin{proposition}
Assume each local feasible set $K_i$ is compact. Then the infinite-dimensional $\mathrm{SL}_{\mathrm{push}}$ measure formulation has the same optimal value as the original compositional POP.
\label{prop:gluing}
\end{proposition}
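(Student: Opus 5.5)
We prove two inequalities between the optimal value $\rho^*$ of the measure problem~\eqref{eq:measure-pop} and the optimal value $p^*$ of the POP~\eqref{eq:pop_sparse}, where the redundant ball constraints have already been added without changing $p^*$.

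\textbf{Upper bound $\rho^*\le p^*$.} Fix a feasible $x=(x_1,\dots,x_n)$, so $g_i(x_i)\ge0$ for all $i$. Generate the states $s_1=F_1(x_1)$ and $s_i=F_i(s_{i-1},x_i)$; by the choice of $M_i,R_i$ they satisfy the ball constraints. Define Dirac measures $\mu_1=\delta_{x_1}$ and $\mu_i=\delta_{(s_{i-1},x_i)}$ for $i\ge2$. These satisfy the support and mass constraints. For the marginal constraint, $(\pi_{s_{i-1}})_\#\mu_i=\delta_{s_{i-1}}$ and $(F_{i-1})_\#\mu_{i-1}=\delta_{F_{i-1}(s_{i-2},x_{i-1})}=\delta_{s_{i-1}}$, so the two agree. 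The objective value is $F_n(s_{n-1},x_n)=s_n=p(x)$. Taking the infimum over feasible $x$ gives $\rho^*\le p^*$.

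\textbf{Lower bound $\rho^*\ge p^*$.} Take any feasible $(\mu_1,\dots,\mu_n)$. The plan is to glue these measures into one probability measure $\nu$ on $(x_1,\dots,x_n)$, supported on the feasible set of the POP, such that the objective equals $\int p\,d\nu$. Then $\int p\,d\nu\ge p^*$.

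The gluing is done inductively, using Lemma~\ref{lemma:gluing} at each step. Let $\nu_1=\mu_1$ on $x_1$. Suppose $\nu_{i-1}$ is a probability measure on $(x_1,\dots,x_{i-1})$ such that
\[
(F^{(i-1)})_\#\nu_{i-1}=(F_{i-1})_\#\mu_{i-1},
\]
where $F^{(i-1)}(x_1,\dots,x_{i-1})$ denotes the state $s_{i-1}$ computed by the recursion. Form the measure $\tilde\nu_{i-1}=(\mathrm{id},F^{(i-1)})_\#\nu_{i-1}$ on $(x_{1:i-1},s_{i-1})$; its marginal on $s_{i-1}$ is $(F_{i-1})_\#\mu_{i-1}$. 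By the feasibility constraint this equals $(\pi_{s_{i-1}})_\#\mu_i$. Apply the gluing lemma with $X_1=\R^{i-1}$, $X_2=\R^{r_{i-1}}$ and $X_3=\R$ to get a measure $\gamma_i$ on $(x_{1:i-1},s_{i-1},x_i)$. Its first two blocks are distributed as $\tilde\nu_{i-1}$, so $s_{i-1}=F^{(i-1)}(x_{1:i-1})$ holds $\gamma_i$-a.e. Let $\nu_i$ be the marginal of $\gamma_i$ on $x_{1:i}$.

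We then check the induction hypothesis for $\nu_i$. Since $s_{i-1}=F^{(i-1)}(x_{1:i-1})$ holds $\gamma_i$-a.e., we have $F^{(i)}(x_{1:i})=F_i(s_{i-1},x_i)$ $\gamma_i$-a.e. The $(s_{i-1},x_i)$ marginal of $\gamma_i$ is $\mu_i$. Hence $(F^{(i)})_\#\nu_i=(F_i)_\#\mu_i$.

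Two properties of the final measure $\nu=\nu_n$ remain to be checked. First, the marginal of $\nu$ on each $x_i$ equals the $x_i$-marginal of $\mu_i$, which is supported in $\{g_i\ge0\}$; therefore $\nu$ is supported on the feasible set $K=\prod_i\{x_i: g_i(x_i)\ge0\}$. Second, the objective satisfies
\[
\int F_n\,d\mu_n=\int F_n(s_{n-1},x_n)\,d\gamma_n=\int p\,d\nu\ge p^*.
\]
Taking the infimum over feasible $(\mu_i)$ gives $\rho^*\ge p^*$.

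\textbf{Main obstacle.} The delicate step is the bookkeeping in the induction. We must show that the glued measure really concentrates on the graph $s_{i-1}=F^{(i-1)}(x_{1:i-1})$, which is what allows the push-forward identity to propagate from one stage to the next. This relies on the gluing lemma returning a measure whose $(x_{1:i-1},s_{i-1})$-marginal is exactly the graph measure $\tilde\nu_{i-1}$. Compactness of the $K_i$ and continuity of the polynomial maps $F_i$ ensure that all measures live on compact Polish spaces, so Lemma~\ref{lemma:gluing} applies. They also ensure that all integrals are finite and that every push-forward is Borel.
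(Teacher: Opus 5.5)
Your proposal is correct and follows essentially the same route as the paper: Dirac measures along a feasible trajectory give $\rho^*\le p^*$, and iterated use of the gluing lemma through the shared state $s_{i-1}$ gives $\rho^*\ge p^*$. The only difference is bookkeeping. You glue $\mu_i$ against the graph lift of the accumulated measure on $x_{1:i-1}$ and explicitly check that $s_{i-1}=F^{(i-1)}(x_{1:i-1})$ holds almost everywhere. The paper instead lifts each $\mu_i$ to $(s_{i-1},x_i,s_i)$ and simply asserts that the glued measure is supported on the feasible set.
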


\begin{proof}
We prove both inequalities.

First, let $(x_1,\dots,x_n)$ be any feasible point of the original problem, and define the states at each step recursively by
\[
s_1=F_1(x_1), \qquad s_i=F_i(s_{i-1},x_i), \quad i=2,\dots,n.
\]
This feasible trajectory induces a family of Dirac measures:
\[
\mu_1=\delta_{x_1}, \quad \mu_i=\delta_{(s_{i-1},x_i)}.
\]
for $i=2,\dots, n$. These measures satisfy the $\mathrm{SL}_{\mathrm{push}}$ support, marginal consistency constraints, and the objective value is exactly $s_n$. Therefore, the $\mathrm{SL}_{\mathrm{push}}$ measure optimum is at most the optimum of the original problem.

Conversely, let $(\mu_i)_{i=1}^n$ be any feasible family for the $\mathrm{SL}_{\mathrm{push}}$ measure formulation. By the marginal consistency constraints,
\[
(\pi_{s_{i-1}})_\#\mu_i\;=\; (F_{i-1})_{\#}\mu_{i-1}, \qquad i=2,\dots,n.
\]
Thus the local measures are consistent on their overlaps. The problem has a chain structure, and the overlap of all local blocks is exactly the state variable $s_{i-1}$, therefore by imposing the marginal consistency constraints we are ensuring that these shared variables are the same if we look at them from one stage or from the next one. Specifically, we have $\mu_1(x_1)$ and $\mu_2(s_1,x_2)$ with the marginals satisfying $(\pi_{s_{1}})_\#\mu_2= (F_{1}(x_1))_{\#}\mu_{1}$. Now we can define the image of $\mu_1$ via the map $x_1\mapsto (x_1,F_1(x_1))$ to define the auxiliary measure $\hat{\mu}_1(x_1,s_1)$. Similarly, via the map $(s_1,x_2)\mapsto (s_1,x_2,F_2(s_1,x_2))$ we define the auxiliary measure $\hat{\mu}_2(s_1,x_2,s_2)$, where $(\pi_{s_{1}})_\#\hat{\mu}_1= (\pi_{s_{1}})_\#\hat{\mu}_2$, which allows to glue them through their overlap $s_1$, to get the measure $\hat{\mu}_{(1,2)}(x_1,s_1,x_2,s_2)$.

We can repeat this process consecutively, the next step would be taking $\hat{\mu}_{(1,2)}(x_1,s_1,x_2,s_2)$ and $\hat{\mu}_3(s_2,x_3,s_3)$, satisfying the marginal condition $(\pi_{s_{2}})_\#\hat{\mu}_{(1,2)}= (\pi_{s_{2}})_\#\hat{\mu}_3$, which yields a new measure $\hat{\mu}_{(1,2,3)}(x_1,s_1,x_2,s_2,x_3,s_3)$. After repeating this process a total of $n-1$ times we obtain a global measure $\hat{\mu}_T(x_1,s_1,x_2,\dots,s_n)$.

Moreover, by the support and push-forward constraints, this global measure is supported on the feasible set of the original compositional problem. Hence, the $\mathrm{SL}_{\mathrm{push}}$ objective is an average of feasible objective values of the original problem, so it is bounded below by the global optimum. Therefore, the $\mathrm{SL}_{\mathrm{push}}$ measure optimum is at least the optimum of the original problem. Combining both inequalities proves the claim.
\end{proof}

Now we have to show that the $\mathrm{SL}_{\mathrm{push}}$ hierarchy does indeed converge to the global optimum.

\begin{theorem}
We have
\[
p_k^{\mathrm{push}} \nearrow p^*
\qquad \text{as } k\to\infty.
\]
\end{theorem}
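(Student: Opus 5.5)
The plan is the standard three-part argument for measure-based moment hierarchies: monotonicity, a lower-bound property, and convergence via compactness. Proposition~\ref{prop:gluing} is the bridge between the measure problem and the original POP.

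\textbf{Monotonicity and validity.} If $(y^{(i)})$ is feasible for $(P^{\mathrm{push}}_{k+1})$, its truncation to degree $2k$ is feasible for $(P^{\mathrm{push}}_k)$ with the same objective. The moment and localizing matrices of order $k$ are principal submatrices of those of order $k+1$, and the push-forward equalities of order $k$ form a subset of those of order $k+1$. Hence $p^{\mathrm{push}}_k\le p^{\mathrm{push}}_{k+1}$.

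Any feasible family of measures for \eqref{eq:measure-pop} yields feasible truncated moments. Combined with Proposition~\ref{prop:gluing}, this gives $p^{\mathrm{push}}_k\le p^*$. The increasing bounded sequence therefore has a limit $p^\infty\le p^*$.

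\textbf{Convergence.} I would argue at the moment level. For each $k$, pick a feasible $(y^{(i),k})$ with objective at most $p^{\mathrm{push}}_k+1/k$.

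The localizing constraints for $M_i^2-x_i^2$ and $R_{i-1}^2-\|s_{i-1}\|^2$ make each quadratic module Archimedean in the variables of $\mu_i$. The standard argument then gives uniform bounds $|y^{(i),k}_\alpha|\le C_\alpha$ independent of $k$; for instance, diagonal entries are bounded by powers of $M_i^2+R_{i-1}^2$, using $y_0=1$ and the localizing inequalities. I would then extend by zero and use a diagonal extraction to obtain a subsequence converging pointwise to infinite sequences $y^{(i),\infty}$.

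In the limit, all moment and localizing matrices of every order are PSD. Putinar's theorem (the Archimedean moment problem) then gives representing measures $\mu_i$ supported on the local sets $K_i$ intersected with the balls.

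The push-forward equalities $L_{y^{(i+1)}}(s_i^\alpha)=L_{y^{(i)}}(F_i^\alpha)$ pass to the limit for every $\alpha$. Each individual equality is imposed once $k$ is large enough, and both sides involve finitely many moments. So the moments of $(\pi_{s_i})_\#\mu_{i+1}$ and $(F_i)_\#\mu_i$ agree. Both measures are compactly supported: the first on the ball of radius $R_i$, the second on $F_i$ of a compact set. Moment determinacy on compact sets then gives $(\pi_{s_i})_\#\mu_{i+1}=(F_i)_\#\mu_i$.

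The objective $L_{y^{(n)}}(F_n)$ also passes to the limit. Hence $(\mu_i)$ is feasible for \eqref{eq:measure-pop} with value $p^\infty$, and Proposition~\ref{prop:gluing} gives $p^\infty\ge p^*$.

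\textbf{Main obstacle.} The delicate step is the uniform bound on the moments, together with the fact that the push-forward constraints involve moments of $\mu_i$ of degree $\deg(F_i^\alpha)$, which can exceed the degree of the $s_i$-moments of $\mu_{i+1}$. I would handle this by noting that the limiting argument only needs each fixed equality to hold for all large $k$, which it does.

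A subtler point is that the ball constraint on $s_{i-1}$ is imposed on $\mu_i$ but not directly on $F_{i-1}$'s image. This is harmless, since $F_{i-1}$ maps the compact support of $\mu_{i-1}$ into a compact set. Alternatively, one could follow the SOS route: show strict feasibility and use Putinar on each block together with the telescoping structure of $V_i$. I would avoid this route because it requires constructing potentials $V_i$ approximating value functions, which is harder than the primal compactness argument.
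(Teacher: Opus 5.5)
Your argument is correct and follows the same route as the paper: Proposition~\ref{prop:gluing} identifies the value of the measure problem with $p^*$, and convergence then follows from Lasserre-type relaxations of a compact generalized moment problem. The only difference is that the paper simply cites this convergence result, while you prove it directly, via truncation for monotonicity, uniform moment bounds from the ball localizers, diagonal extraction, Putinar's representation, and moment determinacy to recover the push-forward constraints from the countably many moment equalities.
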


\begin{proof}

By Proposition~\ref{prop:gluing}, the infinite-dimensional
$\mathrm{SL}_{\mathrm{push}}$ measure problem is an exact reformulation of the original
compositional polynomial optimization problem, and therefore has optimal value $p^*$.

We are thus in the setting of a generalized moment problem with finitely many measures,
linear coupling constraints, and compact Archimedean support sets (due to the redundant ball constraints). Standard convergence
results for Lasserre-type relaxations of compact generalized moment problems then imply that
the associated SDP relaxations converge monotonically to the value of the measure problem \cite{Lasserre2008GPM}.
Hence
\[
p_k^{\mathrm{push}} \nearrow p^* .
\]
\end{proof}

\subsection{Complexity}

To study how the $\mathrm{SL}_{\mathrm{push}}$ hierarchy scales in terms of computational resources needed to solve the problem, we start by assuming all ranks are the same $r_i=r \ \forall i$. The  measures $\mu_i$ live on $(s_{i-1},x_i)$ of dimension $r+1$, and will have moment matrices of relaxation order $k_{\mu}$. We also let the degree of the composition map to be $\deg(F_{i,l})=d$.

\textbf{SDP block sizes.} The sizes of the dominant moment matrices are
\[
\binom{r+1+k_{\mu}}{k_{\mu}}, 
\]
and the total number of SDP blocks is $n$.

\textbf{Push-forward equalities.} The $\mathrm{SL}_{\mathrm{push}}$ hierarchy enforces $(\pi_{s_{i}})_\#\mu_{i+1} = (F_{i})_{\#}\mu_{i}$, which at the moment level implies equalities
\[
L_{y^{(i+1)}}\!\big(s_{i}^\alpha\big) = L_{y^{(i)}}\!\big(F_i(s_{i-1},x_i)^\alpha\big).
\]
We can only enforce these if the moments are properly defined on both sides, which requires to fix a maximal degree $\alpha_{\max}$ with $\deg(F_i^{\alpha})\leq d|\alpha| \leq 2k_{\mu}$, which is equivalent to requiring $|\alpha|\leq \lfloor \frac{2k_{\mu}}{d}  \rfloor$. Therefore, the number of push-forward equalities is 
\[
(n-1)\binom{r+\lfloor \frac{2k_{\mu}}{d}  \rfloor}{\lfloor \frac{2k_{\mu}}{d}  \rfloor}.
\]

We see how for $\mathrm{SL}_{\mathrm{push}}$ the SDP block sizes are likely to be the ones defining the complexity, as the relaxation order $k_{\mu}$ might be very large. Meanwhile, for $\mathrm{SL}_{\mathrm{chord}}$ we saw how the number of equalities was the main complexity driver.

\section{Numerical examples}
\label{sec:examples}

In this section we study numerically how each hierarchy, $\mathrm{SL}_{\mathrm{chord}}$ and $\mathrm{SL}_{\mathrm{push}}$, gives bounds for optimization of polynomials with composition structure. Each of the hierarchies will provide a substantial scalability advantage for different types of composition polynomials. First, we compare the two proposed hierarchies with the dense hierarchy, and afterwards we study the computational limits for $\mathrm{SL}_{\mathrm{chord}}$ and $\mathrm{SL}_{\mathrm{push}}$ in terms of number of variables. We design a very specific type of tensor train polynomials for this example that mitigates numerical conditioning problems and therefore allows us to study the computational scalability in isolation.

We use the box constraints $x_i^2\leq 1$ to keep the inequalities local to the cliques. The numerical experiments were performed with the implemented hierarchies\footnote{https://github.com/llorebaga/SLPOP} using the \textit{Julia} \cite{Julia-2017} package \textit{TSSOS} \cite{Magron2021}. The SDP solver \textit{Mosek} \cite{Mosek} package was also a fundamental part of the code. The computer was a laptop with a CPU AMD Ryzen 7 PRO 7840U w/ Radeon 780M, 8 cores, base speed 3.30 GHz, and 32.0 GB of RAM.

\subsection{Comparison of composition and dense hierarchies}

Let us start considering random polynomials with generic quadratic composition structure, of the form
\begin{align*}
    s_{1,l}&=c_0^{(l)} + c_1^{(l)}x_1+c_2^{(l)} x_1^2\\
    s_{i,l}&=\sum_{p_1,p_2,\alpha=0}^2 c^{(i,l)}_{p_1,p_2,\alpha} s_{i-1,1}^{p_1}s_{i-1,2}^{p_2} x_i^{\alpha},
\end{align*}
where $l\leq r$ for rank $r=2$, degree 2 also in the local variables, as we see in the state definition, and we sample the random coefficients 
as
\[
c_{p_1,p_2,\alpha}^{(i,l)}=\frac{1}{2}\mathcal{N}(0,1)(0.7)^{p_1+p_2+\alpha+1},
\]
which will ensure the problem remains well conditioned and we can study the performance of the hierarchies independently from SDP solver errors.
For the numerics of Table \ref{tab:numerics_composition} we use a relaxation order of $k_{\text{dense}}=2^{n-2}$, which is the minimum one, considering the degree of the full polynomial is $2^{n-1}$. We have used relaxation order of $k_{\text{chord}}=3$ for $\mathrm{SL}_{\mathrm{chord}}$ and $k_{\mu}=4$ for $\mathrm{SL}_{\mathrm{push}}$. During the following examples we will set a maximum running time of $300$ seconds, and if we exceed this we will leave a blank space in the tables.

\begin{table}[h]
\centering
\begin{tabular}{l@{\hspace{2em}}ccccc}
\toprule
 & \multicolumn{5}{c}{$n$} \\
\cmidrule(lr){2-6}
 & 2 & 3 & 4 & 5 & 6 \\
\midrule

\multicolumn{6}{l}{\textbf{Optimum}} \\
Dense  & -0.120423 & 0.028422 & -0.240189 &- & - \\
$\mathrm{SL}_{\mathrm{chord}}$  & -0.120423 & 0.028422 & -0.240188 & -0.018408 & -0.503042 \\
$\mathrm{SL}_{\mathrm{push}}$  & -0.120423 & 0.028422 & -0.240188 & -0.018408 & -0.575960 \\
\addlinespace

\multicolumn{6}{l}{\textbf{Time (s)}} \\
Dense  & 0.004 & 0.053 & 2.257 & - & - \\
$\mathrm{SL}_{\mathrm{chord}}$  & 1.141 & 1.938 & 3.093 & 3.141 & 4.703 \\
$\mathrm{SL}_{\mathrm{push}}$  & 0.172 & 0.594 & 0.515 & 0.562 & 1.312 \\
\addlinespace

\multicolumn{6}{l}{\textbf{Block size}} \\
Dense  & 3 & 20 & 71 & - & - \\
$\mathrm{SL}_{\mathrm{chord}}$  & 56 & 56 & 56 & 56 & 56 \\
$\mathrm{SL}_{\mathrm{push}}$  & 35 & 35 & 35 & 35 & 35 \\
\addlinespace

\multicolumn{6}{l}{\textbf{Constraints}} \\
$\mathrm{SL}_{\mathrm{chord}}$  & 352 & 633 & 914 & 1195 & 1476 \\
$\mathrm{SL}_{\mathrm{push}}$  & 17 & 33 & 49 & 65 & 81 \\
\bottomrule
\end{tabular}
\caption{Comparison of Dense, $\mathrm{SL}_{\mathrm{chord}}$ and $\mathrm{SL}_{\mathrm{push}}$ for random quadratic composition polynomials of ranks 2 and local degree 2.}
\label{tab:numerics_composition}
\end{table}

We observe how the dense hierarchy starts taking several minutes for more than 4 variables, while $\mathrm{SL}_{\mathrm{chord}}$ and $\mathrm{SL}_{\mathrm{push}}$ remain within less than 10 seconds for $n=6$. A key observation is how the SDP block sizes of the dense hierarchy increase very fast while the ones of the $\mathrm{SL}_{\mathrm{chord}}$ and $\mathrm{SL}_{\mathrm{push}}$ remain constant, as expected. The driver of complexity is then the number of linear constraints, which increases linearly for both hierarchies. 

Let us do a similar study, but now we will consider tensor train polynomials, and furthermore we want to test the strengths and weaknesses of each of the proposed hierarchies. We will start with a tensor train polynomial of larger degree and lower rank, and then we will consider one with larger rank and lower degree, to see which hierarchy is suitable in which cases.

Firstly, we let $r=2,d=4$, and we fix the relaxation orders to be $k_{\text{dense}}=2n$, $k_{\text{chord}}=3$ and $k_{\mu}=10$, because if we try to use smaller relaxation orders the bounds are not tight. In Table \ref{tab:numerics_r2_d4} we see how $\mathrm{SL}_{\mathrm{push}}$ cannot find the optimal solution within the given time, due to the large SDP blocks it has to work with due to the high degree caused by explicit polynomial composition. On the other hand, $\mathrm{SL}_{\mathrm{chord}}$ can compute the optimum up to $n=6$ within 5 minutes and with results matching the dense hierarchy at the first examples; afterwards the dense hierarchy also struggles with the computational resources needed.

\begin{table}[h]
\centering
\begin{tabular}{l@{\hspace{2.5em}}ccccc}
\toprule
 & \multicolumn{5}{c}{$n$} \\
\cmidrule(lr){2-6}
 & 2 & 3 & 4 & 5 & 6 \\
\midrule

\multicolumn{6}{l}{\textbf{Optimum}} \\
Dense  & -0.659361 & -0.504356 & -0.378924 & - & - \\
$\mathrm{SL}_{\mathrm{chord}}$  & -0.659361 & -0.504356 & -0.378924 & -0.556061 & -0.399084 \\
$\mathrm{SL}_{\mathrm{push}}$  & - & - & - & - & - \\
\addlinespace

\multicolumn{6}{l}{\textbf{Time (s)}} \\
Dense  & 0.008 & 0.579 & 254.3 & - & - \\
$\mathrm{SL}_{\mathrm{chord}}$  & 3.219 & 52.67 & 81.76 & 120.7 & 165.3 \\
$\mathrm{SL}_{\mathrm{push}}$  & - & - & - & - & - \\
\addlinespace

\multicolumn{6}{l}{\textbf{Block size}} \\
Dense  & 5 & 31 & 119 & - & - \\
$\mathrm{SL}_{\mathrm{chord}}$  & 126 & 126 & 126 & 126 & 126 \\
$\mathrm{SL}_{\mathrm{push}}$  & - & - & - & - & - \\
\addlinespace

\multicolumn{6}{l}{\textbf{Constraints}} \\
Dense  & 0 & 0 & 0 & - & - \\
$\mathrm{SL}_{\mathrm{chord}}$  & 152 & 310 & 468 & 626 & 784 \\
$\mathrm{SL}_{\mathrm{push}}$  & - & - & - & - & - \\
\bottomrule
\end{tabular}
\caption{Comparison of Dense, $\mathrm{SL}_{\mathrm{chord}}$ and $\mathrm{SL}_{\mathrm{push}}$ for random tensor train polynomials of ranks 2 and local degree 4.}
\label{tab:numerics_r2_d4}
\end{table}

We also study examples with $r=4,d=2$ and relaxation orders to be $k_{\text{dense}}=2n$, $k_{\text{chord}}=3$ and $k_{\mu}=3$. The results in Table \ref{tab:numerics_r4_d2} show how $\mathrm{SL}_{\mathrm{chord}}$ struggles more than $\mathrm{SL}_{\mathrm{push}}$ to solve the SDP, due to now having the rank as the main complexity driver. Note also the impact of using a very low relaxation order for $\mathrm{SL}_{\mathrm{push}}$, which has optimum values that lie lower than the lower bounds given by the dense hierarchy.

\begin{table}[h]
\centering
\begin{tabular}{l@{\hspace{2.5em}}ccccc}
\toprule
 & \multicolumn{5}{c}{$n$} \\
\cmidrule(lr){2-6}
 & 2 & 3 & 4 & 5 & 6 \\
\midrule

\multicolumn{6}{l}{\textbf{Optimum}} \\
Dense  & 0.124422 & -2.125133 & -1.968884 & -3.856409 & - \\
$\mathrm{SL}_{\mathrm{chord}}$  & 0.124422 & - & - & - & - \\
$\mathrm{SL}_{\mathrm{push}}$  & 0.034106 & -2.125132 & -1.968882 & -4.5439500 & -5.086583 \\
\addlinespace

\multicolumn{6}{l}{\textbf{Time (s)}} \\
Dense  & 0.004 & 0.017 & 0.340 & 8.336 & - \\
$\mathrm{SL}_{\mathrm{chord}}$  & 4.844 & - & - & - & - \\
$\mathrm{SL}_{\mathrm{push}}$  & 0.891 & 2.266 & 2.657 & 4.266 & 6.985 \\
\addlinespace

\multicolumn{6}{l}{\textbf{Block size}} \\
Dense  & 3 & 11 & 34 & 94 & 259 \\
$\mathrm{SL}_{\mathrm{chord}}$  & 84 & - & - & - & - \\
$\mathrm{SL}_{\mathrm{push}}$  & 56 & 56 & 56 & 56 & 56 \\
\addlinespace

\multicolumn{6}{l}{\textbf{Constraints}} \\
$\mathrm{SL}_{\mathrm{chord}}$  & 800 & - & - & - & - \\
$\mathrm{SL}_{\mathrm{push}}$  & 17 & 33 & 49 & 65 & 81 \\
\bottomrule
\end{tabular}
\caption{Comparison of Dense, $\mathrm{SL}_{\mathrm{chord}}$ and $\mathrm{SL}_{\mathrm{push}}$ for random tensor train polynomials of ranks 4 and local degree 2.}
\label{tab:numerics_r4_d2}
\end{table}

All of these results reflect the complexity analysis we have made previously, but now we want to see the computational limits of each hierarchy. Note that for some polynomials we may devise a specific indexing for the moment matrices such that we do not take additional unnecessary monomials into account. An example is with tensor train polynomials, where the state variables enter linearly in the problem, then we don't need the monomials that are nonlinear in the state in the indexing. This could reduce the computational cost of both proposed hierarchies, although the indexing would need to be tailored to the specific problem.

\subsection{Scalability study}

Here we test how scalable both $\mathrm{SL}_{\mathrm{chord}}$ and $\mathrm{SL}_{\mathrm{push}}$ are with an increasing number of variables, and also consider what the computational limits are. For this we design a very specific type of tensor train polynomial that will allow us to minimize the effects of numerical conditioning when we go to very large number of variables and degree. 

Take a polynomial $p(x)=u^T\prod_{i=1}^n P_i(x_i)v$ with ranks $r=2$ and local degree 2. The matrix polynomials $P_i(x_i)$ are constructed as a perturbed identities, as follows
\begin{equation}
    P_i(x_i)=I+\sum_{k=1}^d (\frac{\tau}{n}w_k)B_{i,k}(\frac{x_i+1}{2})^k,
    \label{eq:perturbed}
\end{equation}
with constraints $1-x_i^2\geq 0$. We define the quantities in \ref{eq:perturbed} as follows
\begin{itemize}
    \item $\tau$ controls the perturbation size, we let $\tau=0.1$ here.
    \item $w_k \propto 1/k$ make sure lower degrees are weighted more.
    \item $B_{i,k}$ is a random $r\times r$ matrix with coefficients in $[0,1]$ and normalized.
    \item $(\frac{x_i+1}{2})^k$ ensures appropriate scaling on the box constraint.
\end{itemize}

We fix the orders of the relaxations to be $k_{\text{chord}}=3$ and $k_{\mu}=3$.  Now we run the optimization problems for different number of variables, as shown in Table \ref{tab:numerics_large}, and we see how we manage to find the optimal values for up to $n=1000$, although $\mathrm{SL}_{\mathrm{chord}}$ takes 36 minutes to obtain the last value which is due to the exploding number of constraints of the problem. Note how we expect an optimal value of exactly $2$, following from the identity perturbation structure we have designed, so there is a computational error present in each calculation, going up to $4\%$ and $2\%$ for $n=1000$.

\begin{table}[H]
\centering
\begin{tabular}{l@{\hspace{2em}}ccccc}
\toprule
 & \multicolumn{5}{c}{$n$} \\
\cmidrule(lr){2-6}
 & 10 & 50 & 100 & 200 & 1000 \\
\midrule

\multicolumn{6}{l}{\textbf{Optimum}} \\
$\mathrm{SL}_{\mathrm{chord}}$  & 2.000001 & 2.000001 & 2.001614 & 2.048100 & 2.075775 \\
$\mathrm{SL}_{\mathrm{push}}$  & 2.000000 & 2.000003 & 2.000006 & 2.000290 & 2.045234 \\
\addlinespace

\multicolumn{6}{l}{\textbf{Time (s)}} \\
$\mathrm{SL}_{\mathrm{chord}}$  & 6.478 & 52.05 & 147.7 & 197.1 & 2174 \\
$\mathrm{SL}_{\mathrm{push}}$  & 0.900 & 1.411 & 2.325 & 10.00 & 30.49 \\
\addlinespace

\multicolumn{6}{l}{\textbf{Block size}} \\
$\mathrm{SL}_{\mathrm{chord}}$  & 56 & 56 & 56 & 56 & 56 \\
$\mathrm{SL}_{\mathrm{push}}$  & 20 & 20 & 20 & 20 & 20 \\
\addlinespace

\multicolumn{6}{l}{\textbf{Constraints}} \\
$\mathrm{SL}_{\mathrm{chord}}$  & 1263 & 6903 & 13953 & 28053 & 140853 \\
$\mathrm{SL}_{\mathrm{push}}$  & 64 & 344 & 694 & 1394 & 6994 \\
\bottomrule
\end{tabular}
\caption{Large-scale comparison of $\mathrm{SL}_{\mathrm{chord}}$ and $\mathrm{SL}_{\mathrm{push}}$ hierarchies for tensor train structured polynomials, with ranks 2 and degree 2.}
\label{tab:numerics_large}
\end{table}

\section{Applications}\label{sec:applications}

Let us consider different application examples, where we can see the two hierarchies at use. We start focusing on the linear case which appears in many problems, and we use each hierarchy for their main strength: $\mathrm{SL}_{\mathrm{chord}}$ for lower rank but higher degree polynomials, and $\mathrm{SL}_{\mathrm{push}}$ for higher rank but lower degree polynomials. Then we consider an example with a nonlinear composition map: we consider a feedforward neural network with polynomial activation functions and study its reachability properties.

\subsection{Markov chain}
In this first application we will consider the example where the objective polynomial has the form of a tensor train. We will work with local polynomials of up to degree 4, so here we will use $\mathrm{SL}_{\mathrm{chord}}$, which can handle large degrees better.

We consider a two-state Markov chain \cite{levin2009markov} where the state distribution at time $i$ is the vector $v_i=\begin{pmatrix}
    \Pr(X_i=0) & \Pr(X_i=1)
\end{pmatrix}$, and initial condition $v_0=\begin{pmatrix}
    1 & 0
\end{pmatrix}$. We interpret the state being $0$ as a working state and the $1$ as a "not working" state. The initial condition sets the state as working for the initial time, and now we want to maximize the probability of the state working at time $n$. For this, we build the transition probability matrices
\[
P_i(x_i)=\begin{pmatrix}
    a(x_i) & 1-a(x_i) \\ b(x_i) & 1-b(x_i)
\end{pmatrix}
\]
under a control variable $x_i\in[-1,1]$, where $a(x_i)$ is the probability of a state remaining working, and $b(x_i)$ is the probability of the state recovering from a not working state to working. 

The probability distribution evolves as $v_i=v_{i-1}P_i(x_i)$ such that 
\[
v_n=v_0 P_1(x_1)P_2(x_2)\dots P_n(x_n).
\]

Therefore, the probability of the state working at time $n$ is
\[
p_n(x)=v_0\big(\prod^n_{i=1} P_i(x_i)\big)v_0^T.
\]

If we use transition probabilities that are polynomial, then we have an $\mathrm{SL}_{\mathrm{chord}}$ formulation that we can optimize. Consequently, let's take 
\[
a(x)=a_0+a_2x^2 \quad b(x)=b_0+b_2x^2.
\]

In order to be able to benchmark $\mathrm{SL}_{\mathrm{chord}}$ to the actual solution, we build an example that allows us to calculate the solution analytically. We take $a(x)=0.95-0.20x^2$ and $b(x)=0.05-0.05x^2$. With these, we know the global optima to maximize the final probability will be when the controls are "turned off" $x_i=0$ for $i=1,\dots,n$. The optimal transition matrices will then be 
\[
P(0)=\begin{pmatrix}
    a_0 & 1-a_0\\
    b_0 & 1-b_0
\end{pmatrix},
\]

which means that the state at time $i$ is $v_i=\begin{pmatrix} p_i & 1-p_i \end{pmatrix}$, such that the probability for the state to be working at time $i+1$ is
\[
p_{i+1}= a_0 p_i + b_0(1-p_i)=b_0+(a_0-b_0)p_i.
\]

To find the probability to which the system will converge we find 
\[
p_{\infty}=b_0+(a_0-b_0)p_{\infty} \Rightarrow p_{\infty}=\frac{b_0}{1-a_0+b_0}.
\]

Putting these together, we find the probability at time $n$ to be
\begin{equation}
    p_n=p_{\infty}+(1-p_{\infty})(a_0-b_0)^n = \frac{1}{2}+\frac{1}{2}(0.9)^n,
\end{equation}

so we expect the probability of remaining in the working state to converge to $0.5$. We illustrate the results obtained via $\mathrm{SL}_{\mathrm{chord}}$ in Figure ~\ref{fig:Markov} (left pane), and we see how its curve behaves exactly as we expect as $n$ increases, which also makes it hard to visualize.
\begin{figure}[H]
    \centering
    \includegraphics[width=0.48\linewidth]{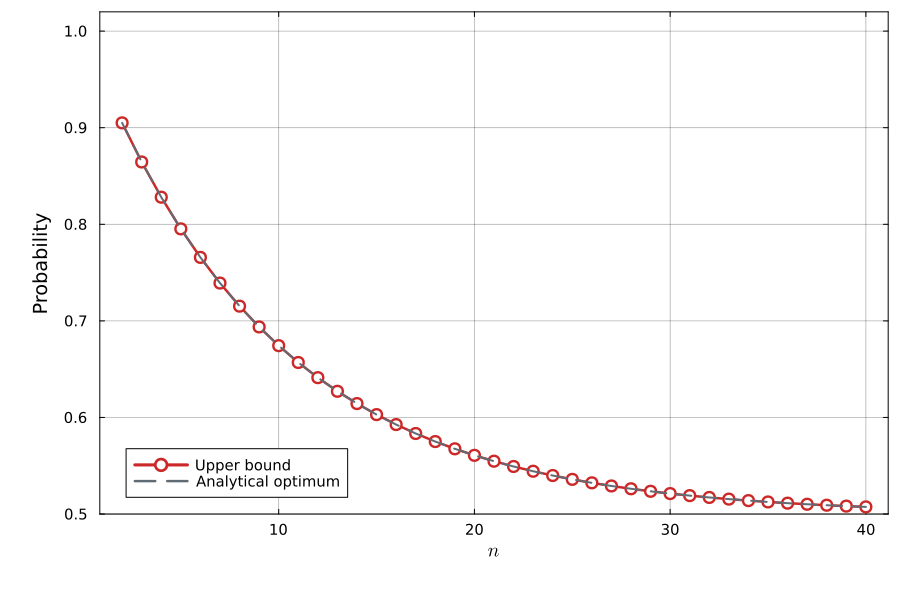}
    \hfill
    \includegraphics[width=0.48\linewidth]{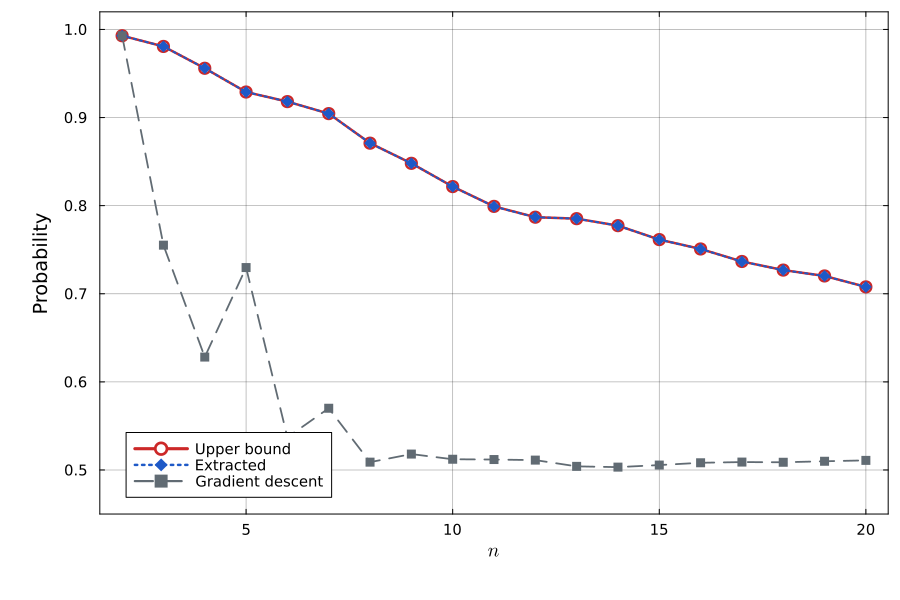}
    \caption{Maximum probability of a 2-state Markov chain with quadratic polynomials as probability transition matrices (left) and with degree-4 frustrated Chebyshev polynomials (right). Comparison between $\mathrm{SL}_{\mathrm{chord}}$ certified upper bound, analytic results, and a local method.}
    \label{fig:Markov}
\end{figure}

Now we consider a Markov chain with changing transition probability matrices for each time step. The functions will now be Chebyshev polynomials of degree 4, which will generate an optimization landscape with more local optima. In this case, we cannot compute an analytical solution so we use projected gradient descent in order to study how $\mathrm{SL}_{\mathrm{chord}}$ does.

In Figure \ref{fig:Markov} (right pane) we compare the upper bound maximum probability obtained by $\mathrm{SL}_{\mathrm{chord}}$ and a projected gradient descent method. We note how $\mathrm{SL}_{\mathrm{chord}}$ provides an upper bound, and projected gradient descent cannot obtain an optimal value close to the $\mathrm{SL}_{\mathrm{chord}}$ value. We also compare these values with the one we obtain by extracting the minimizers from the $\mathrm{SL}_{\mathrm{chord}}$ method and computing the probability. This is done by extracting the first order moments, and it shows how the upper bound is tight with a feasible set of controls that perform better than projected gradient descent.

\subsection{Quantum optimal control}

Here we study the quantum control of a single-qubit \cite{Glaser_2015}. The use of polynomial optimization for quantum optimal control has already been studied in \cite{qcpop,ugs}, but here we consider the sequential application of rotations to the qubit, which allows for the structure to be exploited with our new methods.

The state is a unit vector $s_k \in \R^3, \|s_k\|_2^2=1$, and for each control step we apply a rotation about the $y$ or $z$ axis. To parametrize the problem in a natural way we let 
\[
x_k=\sin(\theta_k), \quad y_k=\cos(\theta_k),\quad x_k^2+y_k^2=1,
\]
and we constrain the time/energy used at each step by setting $|\theta_k|\leq \theta_{\text{max}}\leq \pi/2$, and equivalently $y_k=\cos(\theta_k)\geq \cos(\theta_{\text{max}})=c_{\text{min}}$, which we implement as a localizing constraint. We say the constraints are on time/energy because the control parameters $\theta_k$ come from the time-step size of the rotation matrix, which we can also be defined in terms of energy.

Note that in this problem we will have two variables in each matrix, which means that the maximal clique size will increase by one, as in each clique will include the extra variable we use for the parametrization. This will not lead to numerical problems as we use the $\mathrm{SL}_{\mathrm{push}}$ hierarchy in this example, which allows for higher ranks or number of variables to a lower computational cost compared to $\mathrm{SL}_{\mathrm{chord}}$.

For each step of the time evolution of the system, $k$, we define the state update as 
\begin{equation}
    s_k=P_k(\theta_k)s_{k-1},
\end{equation}
with, letting $(x_k,y_k)=(\sin(\theta_k),\cos(\theta_k))$,
\begin{itemize}
    \item $\text{odd }k: P_k(x_k,y_k)= P_y(x_k,y_k)=\begin{pmatrix}
        y_k & 0 & x_k \\ 0 & 1& 0 \\
        -x_k &0 & y_k
    \end{pmatrix}$,
    \item $\text{even } k:P_k(x_k,y_k)= P_z(x_k,y_k)=\begin{pmatrix}
        y_k & - x_k & 0 \\ 
        x_k & y_k & 0 \\ 0 & 0 & 1
    \end{pmatrix}$.
\end{itemize}

We now aim to solve the quantum optimal control problem of finding the control parameters $\theta_k$ that drive a given initial state to a target state. For our example we let $s_0=(0,0,1)^T$ and $s_{\text{target}}=(0,1,0)^T$, with a time/energy constraint of $\theta_{\text{max}}=0.1$. The POP is then
\begin{align}
    \max_{x_k,y_k,s_k} &\quad s_n \cdot s_{\text{target}}\\
    \text{s.t.} &\quad s_k=P_k(x_k,y_k)s_{k-1},\quad \|s_k\|_2^2=1,\quad x_k^2+y_k^2=1,\quad y_k\geq c_{\text{min}}.
\end{align}

The first case we study is a so-called time optimal control problem, so we want to know how many gates, or steps, we need to implement in order to reach the desired target state to a set accuracy. For this we solve the POP for different number of steps, $n=N$, and plot the optimal value in Figure \ref{fig:toc}. We see how under our constraints, we expect the optimal time to be no less than for $N=43$ rotations, so we get a lower bound for the speed of the system to reach the target state.

\begin{figure}[H]
    \centering
    \includegraphics[width=0.75\linewidth]{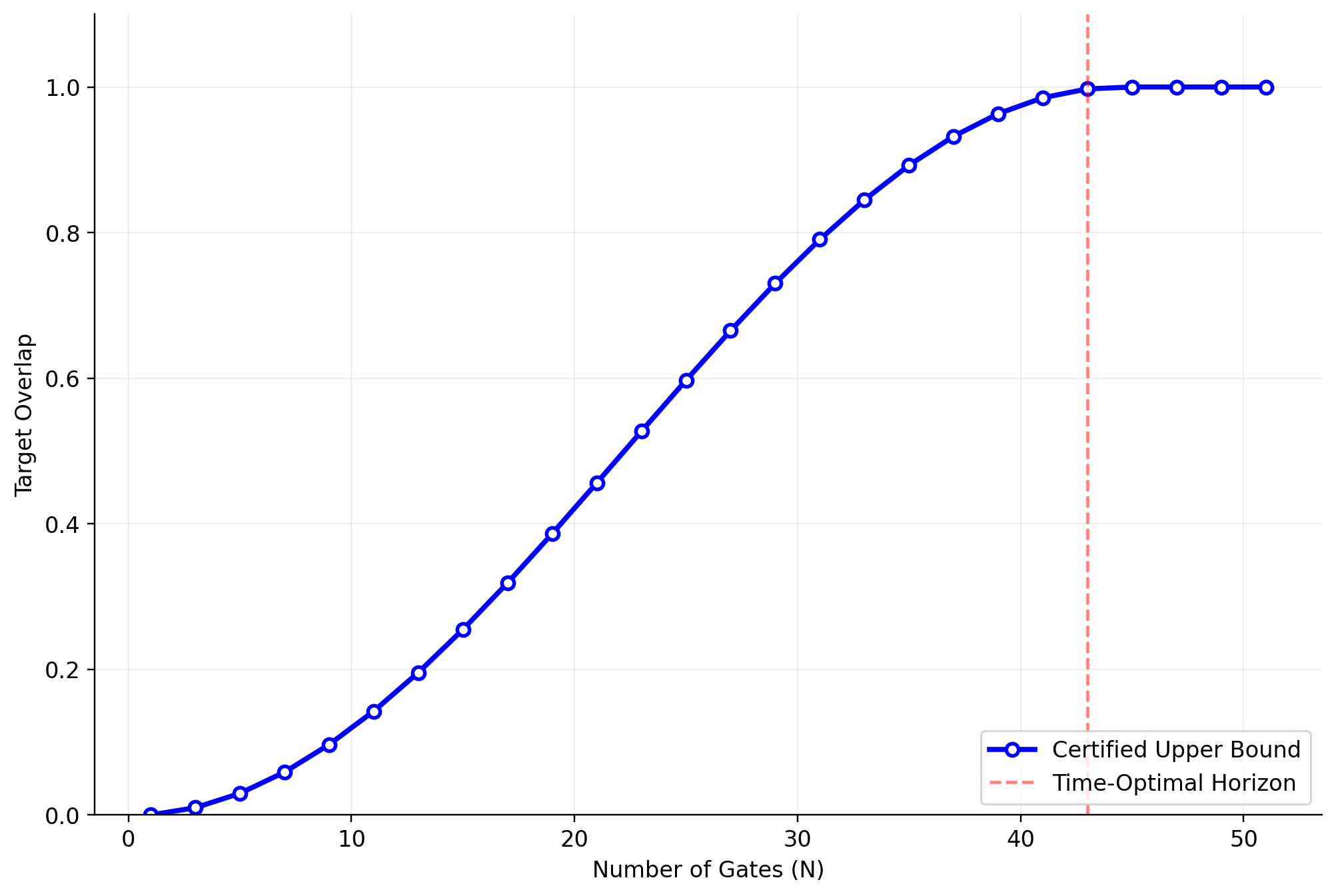}
    \caption{Maximum overlap achieved for different number of steps (gates) using $\mathrm{SL}_{\mathrm{push}}$, and yielding upper bounds on the overlap, which therefore result in a lower bound for the optimal time.}
    \label{fig:toc}
\end{figure}

We can also consider the inverse problem, for a fixed number of gates, determine a control sequence that steers a given initial state as close as possible to a target state. For this, we study the unconstrained problem with a set number of time steps (gates) of $n=5$, initial state $s_0=(0,0,1)^T$ and target state $s_{\text{target}}=(1,0,0)^T$. We use $\mathrm{SL}_{\mathrm{push}}$ to find the maximal overlap between the evolved state and the target state and we obtain $0.999999996$. We want to know which control sequence yields this optimal value, so we use a heuristic extraction method described in detail below in order to get $\theta_{\text{opt}}=\{-0.910, -3.139, 0.675, 0.003, -0.014\}$. If we plug these optimal controls back into the dynamics to compute the final state we get an overlap of $0.999999489$ with the target, almost as good as the optimal value.

\subsubsection{Sequential randomized extraction}


In the quantum control example, the push-forward hierarchy does not give one single global moment sequence for the optimal trajectory. Instead, it gives local moment data associated with each stage of the trajectory, and this changes the computational form of the extraction problem.

The motivation for our extraction procedure is not merely that the push-forward hierarchy involves several measures, since this also happens in sparse moment-SOS hierarchies. The key point is that these measures are different time steps and coupled through the state recursion, such that the object we want to recover is a single feasible trajectory. Standard extraction methods are designed to recover atoms from moment data once a suitable flatness structure is present, but here a stagewise extraction might not produce controls that remain compatible from one time step to the next. An alternative is suggested by Proposition \ref{prop:gluing}, one could first extract atoms from the local stage measures and then glue them through their common state marginals to obtain a measure on the full trajectory space. We do not pursue this approach here since it requires the flatness condition to hold. Instead, we use a sequential local heuristic that constructs one trajectory step by step while enforcing compatibility with the previously selected state.

At each step $k$, the push-forward hierarchy returns a truncated moment sequence for a transition measure $\mu_k$ over the variables $z_k=(s_{k-1},x_k,y_k)$. If $\mu_k$ is nearly an atomic measure then the first moments can already provide a good approximation for the point $(s_{k-1},x_k,y_k)$. However, when $\mu_k$ is not close to atomic, the first moments might simply average several different candidate controls, which might not correspond to a feasible choice. In this case, we use the local moment matrix $M_k$ associated to $\mu_k$ to generate candidate points.

Let \(M_k\) be the moment matrix obtained from the SDP solution at step \(k\), indexed by a monomial basis
\[
v_k(z_k)=\bigl(1,\ s_{k-1,1},\dots,s_{k-1,r_{k-1}},\ x_k,\ y_k,\ \text{higher-degree monomials in } z_k\bigr),
\]
where \(z_k=(s_{k-1},x_k,y_k)\). Compute a factorization $M_k = V_k V_k^\top$, for example from an eigenvalue decomposition. We then sample a Gaussian vector \(g\sim \mathcal N(0,I)\) and define $\omega = V_k g$. The random vector \(\omega\) is indexed by the same monomial basis \(v_k(z_k)\), and has covariance matrix \(M_k\). In this way, the sample reflects the second-order moment information encoded by \(M_k\).

To obtain a candidate point in the original variables, we use the entries of \(\omega\) corresponding to the constant and degree-one monomials. Denoting by \(\omega_1\) the entry associated with the constant monomial \(1\), by \(\omega_{s_{k-1,i}}\) the entry associated with \(s_{k-1,i}\), and by \(\omega_{x_k},\omega_{y_k}\) the entries associated with \(x_k,y_k\), we define
\[
\widetilde s_{k-1,i}=\frac{\omega_{s_{k-1,i}}}{\omega_1},\qquad
\widetilde x_k=\frac{\omega_{x_k}}{\omega_1},\qquad
\widetilde y_k=\frac{\omega_{y_k}}{\omega_1}.
\]
If $|\omega_1|$ is below a prescribed threshold, we discard the sample and resample.

Thus we interpret the sampled linear monomial entries of \(\omega\), normalized by the constant entry, as a candidate value
\[
\widetilde z_k=(\widetilde s_{k-1},\widetilde x_k,\widetilde y_k)
\]
for the state-control variables at step \(k\).

Since we are looking for a single trajectory that is consistent across all stages, the candidate previous state extracted from the local sample at step $k$ must be compatible with the trajectory already constructed from the earlier steps. To make this precise, we define the propagated state recursively by
\[
s_0^{\mathrm{pf}} := s_0,
\qquad
s_j^{\mathrm{pf}} := P_j(\theta_j^{\mathrm{pf}})\, s_{j-1}^{\mathrm{pf}},
\quad j=1,\dots,k-1,
\]
where \(\theta_1^{\mathrm{pf}},\dots,\theta_{k-1}^{\mathrm{pf}}\) are the controls selected in the previous extraction steps. Thus \(s_{k-1}^{\mathrm{pf}}\) is the state reached after propagating the already extracted controls up to stage \(k-1\).

We accept the candidate $\widetilde{z}_k$ only if its previous-state component is sufficiently close to the propagated state, namely if
\[
\|\widetilde s_{k-1}-s_{k-1}^{\mathrm{pf}}\|\le \tau,
\]
for some tolerance parameter $\tau>0$. This enforces compatibility between the local candidate extracted from the moment data at step $k$ and the trajectory constructed from the previously selected controls.

For each accepted candidate, we next enforce feasibility of the control variables with respect to the constraints of the quantum control problem. In our setting, the variables must satisfy $x_k^2+y_k^2=1$, so we project $(\widetilde x_k,\widetilde y_k)$ onto the unit circle by
\[
(\widetilde x_k,\widetilde y_k)\leftarrow
\frac{(\widetilde x_k,\widetilde y_k)}
{\sqrt{\widetilde x_k^2+\widetilde y_k^2}}.
\]
We then recover the corresponding control angle by
\[
\widetilde\theta_k=\operatorname{atan2}(\widetilde x_k,\widetilde y_k).
\]
If additional bounds on the angle are imposed, we enforce them at this stage as well, by projecting $\widetilde\theta_k$ to the admissible interval and redefine $(\widetilde x_k,\widetilde y_k)=(\sin \widetilde\theta_k, \cos \widetilde\theta_k)$. 

To score the accepted candidates, we propagate the previously selected state \(s_{k-1}^{\mathrm{pf}}\) using each candidate angle \(\widetilde\theta_k\), namely
\[
\widetilde s_k = P_k(\widetilde\theta_k)\, s_{k-1}^{\mathrm{pf}}.
\]
Among all accepted candidates, we select the one maximizing the overlap $\widetilde s_k\cdot s_{\mathrm{target}}$. Denoting the selected candidate by \(\theta_k^{\mathrm{pf}}\), we then update the propagated state by
\[
s_k^{\mathrm{pf}} = P_k(\theta_k^{\mathrm{pf}})\, s_{k-1}^{\mathrm{pf}}.
\]
Repeating this procedure for \(k=1,\dots,n\) yields a deterministic sequence of extracted control angles $\theta_1^{\mathrm{pf}},\dots,\theta_n^{\mathrm{pf}}$, and the associated propagated trajectory $s_1^{\mathrm{pf}},\dots,s_n^{\mathrm{pf}}$.

This is a heuristic rounding procedure and is not guaranteed to recover a globally optimal trajectory. In practice, we therefore repeat it for several random seeds and retain the best trajectory obtained. The procedure is summarized in Algorithm \ref{alg}. If $\mathcal{C}_k=\emptyset$ after processing the $N_{\mathrm{samp}}$ samples, we continue sampling until one candidate is accepted.

\begin{algorithm}[H]
\caption{Sequential randomized extraction for push-forward hierarchy}\label{alg}
\begin{algorithmic}[1]
\State \textbf{Input:} moment matrices \(M_k\), initial state \(s_0\), target state \(s_{\mathrm{target}}\), tolerance \(\tau\), number of samples \(N_{\mathrm{samp}}\)
\State Set \(s_0^{\mathrm{pf}}=s_0\)
\For{\(k=1,\dots,n\)}
    \State Compute a factorization \(M_k=V_kV_k^\top\)
    \State Initialize the set of admissible candidates \(\mathcal C_k=\emptyset\)
    \For{\(m=1,\dots,N_{\mathrm{samp}}\)}
        \State Sample \(g^{(m)}\sim\mathcal N(0,I)\) and set \(\omega^{(m)}=V_kg^{(m)}\)
        \State Extract \((\widetilde s_{k-1}^{(m)},\widetilde x_k^{(m)},\widetilde y_k^{(m)})\) from the linear entries of \(\omega^{(m)}\)
        \If{\(\|\widetilde s_{k-1}^{(m)}-s_{k-1}^{\mathrm{pf}}\|\le \tau\)}
            \State Project \((\widetilde x_k^{(m)},\widetilde y_k^{(m)})\) onto the feasible set
            \State Compute the corresponding angle \(\widetilde\theta_k^{(m)}\)
            \State Propagate
            \[
            \widetilde s_k^{(m)}=P_k(\widetilde\theta_k^{(m)})\,s_{k-1}^{\mathrm{pf}}
            \]
            \State Add \((\widetilde\theta_k^{(m)},\widetilde s_k^{(m)})\) to \(\mathcal C_k\)
        \EndIf
    \EndFor
    \State Select
    \[
    (\theta_k^{\mathrm{pf}},s_k^{\mathrm{pf}})
    \in
    \arg\max_{(\theta,s)\in\mathcal C_k} s\cdot s_{\mathrm{target}}
    \]
\EndFor
\State \textbf{Output:} extracted control sequence \((\theta_1^{\mathrm{pf}},\dots,\theta_n^{\mathrm{pf}})\)
\end{algorithmic}
\end{algorithm}

\subsection{Neural network}

We consider a feedforward neural network (FNN) with polynomial activations \cite{goodfellow2016deep} to see a possible application of compositional polynomials. Polynomial optimization has already been used in the context of neural networks; for example \cite{Milan_NN} studies stability and performance, and \cite{NEURIPS2020_dea9ddb2} studies the robustness of neural networks  by computing Lipschitz constants using polynomial optimization. Here we use it in a context of worst-case testing, for which we compute the upper and lower bounds on the state the neural network can take at each step.

We have decision variables $x_1,\dots,x_n\in[-1,1]$ and states $s_i\in \R^r$; we fix $r=3$ for this example, for each stage $i=1,\dots,n$, and initialized at $s_0=0$. The neural network applies the following maps

\begin{equation}
    u_i=A_is_{i-1}+b_ix_i+c_i, \quad s_i=F(u_i), \quad F(t)=t+\alpha t^3,
\end{equation}
for component-wise cubic $t$. The parameters $A_i,b_i,c_i$ are generated randomly once per stage with scaling to keep them small, as follows: $A_i\sim 0.1\cdot\mathcal{N}(0,1), b_i\sim 0.3\cdot \mathcal{N}(0,1), c_i\sim 0.05\cdot \mathcal{N}(0,1)$. 

The problem of interest is to find certificates for both maximum and minimum values that the state $s_{i,1}$ can attain for stages $i=1,\dots,N$, and we fix $N_{\max}=20$, we use $\mathrm{SL}_{\mathrm{chord}}$ for the POPs due to the high degree nature of the polynomial. With these we can find an envelope that tells us the region where all possible state values can lie in. We test this in Figure \ref{fig:tube} by comparing the certified envelope with 50 random trajectories, all of which lie within the envelope and furthermore it allows us to check that for this example we find tight bounds.

\begin{figure}[H]
    \centering
    \includegraphics[width=0.75\linewidth]{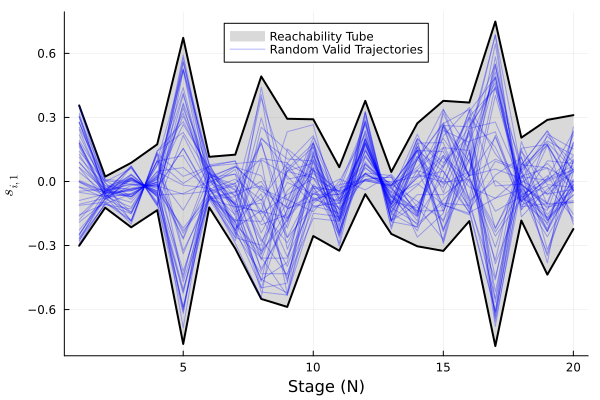}
    \caption{Range of values that the state can take at each stage of the neural network, compared to randomly sampled trajectories.}
    \label{fig:tube}
\end{figure}

This application is a demonstration of a worst-case study, as it  tests the worst-case properties of the system. If we interpret the input data as noise we can also use this example to see how sensitive our system is to disturbance and get a robustness certificate that bounds the trajectories.

\section{Conclusion and perspectives}

This work can be seen as a second step, following the introduction of LRPOP, towards a broader optimization framework for structured high-dimensional polynomials. The chain structure studied here is an example of the fundamental mechanism: writing a highly-complex polynomial as the low-dimensional propagation of intermediate states. This structure can be exploited through sparsity at a graph level or through the underlying measure propagation. From this perspective, chain composition and tensor trains are only a particular example of a more general framework.

A natural next step is then to move beyond chain structures and consider other tensor network topologies. In this setting, the notion of complexity will not only be given by the intermediate state dimensions, but also by the geometry of the tensor graph and the way information is transmitted within it. We expect that this could lead to new moment-SOS hierarchies for trees, and even more general acyclic networks.

Another promising direction is combining these ideas with data modeling. In many applications, the structured representation of the high-dimensional function is not given a priori, but inferred from data. Using both tensor network approximation or model learning with certified polynomial optimization, first learning the low-dimensional approximate model and then optimizing it with guarantees, could provide new links with areas like system identification or reduced-order modeling, where rigorous certificates of optimality are difficult to obtain.

We hope that the ideas developed here help connect two communities that often follow different paths like tensor decomposition for model approximation and scalability, and moment-SOS methods for certification and global guarantees in optimization problems. Bridging these viewpoints may open new lines of work for optimization methods where we not only have scalability but also mathematical certifications.

\section*{Acknowledgements}

We would like to thank Charles Poussot-Vassal, Matteo Rizzi and Henrique Goulart for insightful discussions on tensor decomposition and tensor networks. The authors acknowledge the use of AI for assistance with brainstorming ideas, mathematical development, coding and drafting the manuscript. The final content, analysis and conclusions remain the sole responsibility of the authors. 

This work has been supported by European Union’s HORIZON–MSCA-2023-DN-JD programme under the Horizon Europe (HORIZON) Marie Skłodowska-Curie Actions, grant agreement 101120296 (TENORS). This work was also funded by the European Union under the project ROBOPROX (reg.~no.~CZ.02.01.01/00/22\_008/0004590).

\end{document}